\documentclass[12pt,reqno]{amsart}

\usepackage{amsmath,amssymb,ifthen}
\usepackage{fullpage}
\usepackage{graphicx,psfrag,subfigure}
\usepackage{color}
\usepackage{moreverb}
\usepackage{amsthm}
\usepackage{mathrsfs}
\usepackage{hhline}
\usepackage{bbm}
\usepackage[english]{babel}
\usepackage{tikz}
\usepackage{pgfplots}
\usepackage[utf8]{inputenc}
\usepackage{amsfonts}
\usepackage{enumerate}
\usepackage{hyperref}




\newcommand{\const}[1]{C_{\text{\rm#1}}}

\newcommand{\set}[2]{\big\{#1\,:\,#2\big\}}

\newcommand{\dual}[3][]{#1\langle#2\,,\,#3#1\rangle}
\newcommand{\norm}[3][]{#1\|#2#1\|_{#3}}



\newcommand\R{\mathbb{R}}

\newcommand\LL{\mathcal L}

\newcommand\OO{{\mathcal O}}
\newcommand\PP{\mathcal P}

\newcommand\TT{\mathcal T}



\numberwithin{equation}{section}
\numberwithin{figure}{section}
\newtheorem{theorem}{Theorem}[section]

\newtheorem{lemma}[theorem]{Lemma}

\newtheorem{remark}[theorem]{Remark}

\newcommand*\patchAmsMathEnvironmentForLineno[1]{%
  \expandafter\let\csname old#1\expandafter\endcsname\csname #1\endcsname
  \expandafter\let\csname oldend#1\expandafter\endcsname\csname end#1\endcsname
  \renewenvironment{#1}%
     {\linenomath\csname old#1\endcsname}%
     {\csname oldend#1\endcsname\endlinenomath}}%
\newcommand*\patchBothAmsMathEnvironmentsForLineno[1]{%
  \patchAmsMathEnvironmentForLineno{#1}%
  \patchAmsMathEnvironmentForLineno{#1*}}%
\AtBeginDocument{%
\patchBothAmsMathEnvironmentsForLineno{equation}%
\patchBothAmsMathEnvironmentsForLineno{align}%
\patchBothAmsMathEnvironmentsForLineno{flalign}%
\patchBothAmsMathEnvironmentsForLineno{alignat}%
\patchBothAmsMathEnvironmentsForLineno{gather}%
\patchBothAmsMathEnvironmentsForLineno{multline}%
}
\usepackage[mathlines]{lineno}

\usepackage{bm}

\setlength\marginparwidth{60pt}

\DeclareMathOperator{\divv}{div}
\DeclareMathOperator{\essinf}{ess\,inf}
\DeclareMathOperator*{\argmin}{argmin}
\DeclareMathOperator{\clos}{clos}
\DeclareMathOperator{\ran}{ran}
\newcommand{\cL}{\mathcal L}

\newcommand{\be}{\begin{equation}}
\newcommand{\ee}{\end{equation}}

\DeclareSymbolFont{bbold}{U}{bbold}{m}{n}
\DeclareSymbolFontAlphabet{\mathbbold}{bbold}
\newcommand{\ind}{\mathbbold{1}}

\usetikzlibrary{external}
\tikzexternalize[prefix=figures/]
\usetikzlibrary{calc,math}
\usepackage{tikz-3dplot}

\renewcommand{\d}{\,{\rm d}} 
 
\newcommand{\x}{{\bf x}}
\newcommand{\mmu}{{\boldsymbol \mu}}


\title{Applications of a space-time FOSLS formulation \\for parabolic PDEs}

\author{Gregor Gantner} 
\address{Institute of Analysis and Scientific Computing, TU Wien, Wiedner Hauptstra{\ss}e 8-10, 1040 Vienna, Austria.}
\email{gregor.gantner@asc.tuwien.ac.at}

\author{Rob Stevenson}
\address{Korteweg-de Vries (KdV) Institute for Mathematics, University of Amsterdam, P.O. Box 94248, 1090 GE Amsterdam, The Netherlands.}
\email{rob.p.stevenson@gmail.com}

\thanks{The first author has been supported by the Austrian Science Fund (FWF) under grant J4379-N. The second author has been supported by NSF Grant DMS 172029}

\keywords{Parabolic PDEs, space-time FOSLS, reduced basis method, optimal control problems, time-dependent domains}
\subjclass[2010]{35K20, 49J20, 65M12, 65M15, 65M60}
\begin{document}

\begin{abstract}
In this work, we show that the space-time first-order system least-squares (FOSLS) formulation~[F\"uhrer, Karkulik, \emph{Comput.\ Math.\ Appl.}~92 (2021)] for the heat equation
and its recent generalization~[Gantner, Stevenson, \emph{ESAIM Math.\ Model.\ Numer.\ Anal.}~55 (2021)] to arbitrary second-order parabolic PDEs can be used to efficiently solve parameter-dependent problems, optimal control problems, and problems on time-dependent spatial domains.
\end{abstract}

\date{\today}
\maketitle


\section{Introduction}
It is well-known, e.g., from \cite{wloka87,ss09}, that the operator corresponding to the heat equation $\partial_t u -\Delta_{\bf x} u = f$, $u(0,\cdot)=u_0$ on a time-space cylinder $I\times \Omega$, where $I:=(0,T)$ and $\Omega \subset \R^d$,  with homogeneous Dirichlet boundary conditions, is a boundedly invertible linear mapping between $X$ and $Y' \times L_2(\Omega)$, where $X:=L_2(I;H^1_0(\Omega)) \cap H^1(I;H^{-1}(\Omega))$ and $Y:=L_2(I;H^1_0(\Omega))$.
For source term $f \in L_2(I \times \Omega)$ and initial data $u_0\in L_2(\Omega)$, the recent work~\cite{fk21} has proven that 
\begin{align*}
\argmin_{{\bf u}=(u_1,{\bf u}_2) \in U} \|\divv {\bf u}_2-f\|_{L_2(I\times\Omega)}^2+\|{\bf u}_2 +\nabla_{\bf x} u_1\|^2_{L_2(I\times\Omega)^d}+\|u(0,\cdot)-u_0\|^2_{L_2(\Omega)},
\end{align*}
where $U:=\{{\bf u}\in X \times L_2(I\times \Omega)^d\colon\divv {\bf u} \in L_2(I\times\Omega)\}$ equipped with the graph norm,
is a well-posed first-order system least-squares (FOSLS) formulation for the pair of the solution $u=u_1$ and (minus) its spatial gradient $-\nabla_{\bf x} u={\bf u}_2$.
This formulation can already be found in \cite{bg09} without a proof of its well-posedness though.
In \cite{gs21}, we have generalized the result to second-order parabolic PDEs with arbitrary Dirichlet and/or Neumann boundary conditions (in the case of inhomogeneous boundary conditions appending a squared norm of a boundary residual, which is not of $L_2$-type, to the least-squares functional). 
In particular, we have shown that source terms $f\not\in L_2(I\times \Omega)$ are covered and $X$ in the definition of $U$ may be replaced by $Y$. 
We also mention our recent generalization to the instationary Stokes problem with so-called slip boundary conditions~\cite{gs22}.

Compared to other space-time discretization approaches such as \cite{andreev13,steinbach15,lmn16,sw21a,sw21b}, the FOSLS formulation has the major advantage that it results in a symmetric, and, w.r.t.~a mesh-independent norm, bounded and coercive bilinear form
so that the Galerkin approximation from \emph{any} conforming trial space is a quasi-best approximation from that space. 
At least for homogeneous boundary conditions, the minimization is w.r.t.~$L_2$-norms, so that the arising stiffness matrix is sparse and can be easily computed. 
Moreover, the least-squares functional provides a reliable and efficient a posteriori estimator for the error in the $U$-norm.
One disadvantage of the FOSLS method from \cite{fk21,gs21} is that the latter norm for the error in the pair $(u,-\nabla_{\bf x} u)$ appears to be considerably stronger than the $X$-norm for the error in $u$. 
Indeed, for standard Lagrange finite element spaces applied to non-smooth solutions, e.g.,~as those that result from a discontinuity in the transition of initial and boundary data, relatively low convergence rates are reported in \cite{fk21}, even when adaptive refinement is employed. 
This issue shall be addressed in our future work~\cite{gs22+} by constructing more suitable trial spaces. 
 
In the present work, we exploit the aforementioned advantages of the FOSLS method to efficiently solve parameter-dependent problems, optimal control problems, and problems on time-dependent spatial domains.

\subsection{Parameter-dependent problems}
We consider a reduced basis method, where in a potentially expensive \emph{offline phase} a basis of (highly accurate approximations of the) solutions corresponding to a suitable finite subset of the parameter space is computed so that in the subsequent \emph{online phase} the solution for arbitrary given parameters can be efficiently approximated from the spanned reduced basis space.

Compared to reduced basis methods based on time stepping and proper orthogonal decomposition (POD), e.g., \cite{ho08,ekp11,haasdonk17}, reduced basis methods based on simultaneous space-time discretization, e.g., \cite{up14,yano14,ypu14}, yield not only a dimension reduction in space but in space-time; see the comparison in \cite{gmu17}. Consequently, in any case the resulting online phase can be expected to be more efficient.

As already mentioned, compared to other space-time discretization approaches, the FOSLS formulation has the advantage that it results in a coercive bilinear form. In this setting the strongest theoretical results for the reduced basis method are available (see, e.g., \cite{haasdonk17}), and the construction of the reduced basis does not need to be accompanied by the construction of a corresponding test space that yields inf-sup stability.

Finally, the reliable and efficient a posteriori error estimator of the FOSLS formulation is beneficial for both building the reduced basis and for certification of the obtained approximate solution in the online phase. 


\subsection{Optimal control problems}
We consider a large class of optimal control problems constrained by second-order parabolic PDEs. 
This includes \emph{controls} of the source term in $L_2(I\times \Omega)$ or in $Y'$ 
 and of the initial data in $L_2(\Omega)$ as well as desired \emph{observations} of the PDE solution in $L_2(I\times\Omega)$, its spatial gradient in $L_2(I\times\Omega)^d$, or its restriction to the final time point in $L_2(\Omega)$. 

It is well-known that optimal control problems can be equivalently formulated as saddle-point problem; see, e.g., \cite{lions68}.
The saddle-point formulation is a coupled system of the primal equation and some adjoint equation. 
When solving it iteratively, e.g., by a gradient descent method  \cite{troltzsch10,hpuu08}, the approximate solutions are required over the whole space-time cylinder $I\times \Omega$ and not just at the final time point. 
This nullifies the potential advantage of time-stepping methods such as \cite{mv07,mv08}, which in the case of a forward problem
only need to store the approximation at the current time step. 

Instead space-time methods such as \cite{ghz12,lsty21a,lsty21b,ls21}  aim to solve the saddle-point problem at once on some mesh of the space-time cylinder $I\times \Omega$. 
All of the mentioned works consider optimal control problems constrained by the heat equation with control of the source term and desired observation of the PDE solution itself. 
In contrast to time-stepping methods, all of them allow for locally adapted meshes because the involved bilinear forms are coercive or inf-sup stable. 
However, only \cite{ghz12}, which reformulates the problem as a system of fourth order in space and second order in time requiring additional regularity, yields quasi-optimal approximations in the usual sense with the same norm on both sides, as for the others the norms in which the bilinear forms are continuous differ from the norms in which they are coercive or inf-sup stable.

In our approach, we consider the saddle-point problem when the parabolic PDE is replaced by the equivalent FOSLS. 
Coercivity of the latter implies that this saddle-point problem is for \emph{arbitrary} discrete subspaces uniformly inf-sup stable and continuous w.r.t.\ the same norm so that quasi-optimality in the usual sense is valid. 
We also provide one reliable and one efficient a posteriori error estimator for our method. 
Other estimators have been derived in \cite{ghz12,ls21} for the respective methods. 

\subsection{Time-dependent domains}
Finally, we consider second-order parabolic PDEs on time-dependent domains. 
While this poses a technical difficulty for time-stepping methods~\cite{gs17,fr17,lo19,sg20,fs22}, in principle, space-time methods~\cite{lehrenfeld15,hlz16,lmn16,moore18,bht22} can be readily applied because the overall space-time domain remains fixed. 

We show that this is indeed the case for the FOSLS method~\cite{fk21,gs21} verifying its well-posedness. 
The analysis requires minimal regularity assumptions on the mapping describing the motion of the spatial domain throughout time.
However, we emphasize that our method ultimately only requires a mesh of the space-time domain.
This is in contrast to the so-called arbitrary Lagrangian-Eulerian (ALE) time-stepping methods~\cite{shd01,gs17,sg20}, which solve the problem on the transformed time-independent domain and thus explicitly involve the mapping. 
Moreover, our method again allows for arbitrary discrete trial spaces and immediately provides an a reliable and efficient a posteriori error estimator.

\subsection{Outline}\label{sec:outline}
In Section~\ref{sec:preliminaries}, we fix the general notation (Section~\ref{sec:notation}) and recall the formulation of general second-order parabolic PDEs as a first-order system (Section~\ref{sec:fosls}). 
In Section~\ref{sec:parameter problems}, we introduce a corresponding reduced basis method for parameter-dependent problems (Section~\ref{sec:reduced basis}), discuss the generation of a suitable basis (Section~\ref{sec:greedy algorithm}), and provide a numerical experiment (Section~\ref{sec:reduced 1d}).
In Section~\ref{sec:optimal control}, we show that the formulation of optimal control problems constrained by the first-order system as a saddle-point problem is uniformly stable for arbitrary trial spaces (Section~\ref{sec:saddle point}), derive an optimality system of PDEs for a certain class of optimal control problems (Section~\ref{sec:optimality system}), which is then used to derive a posteriori error estimators (Section~\ref{sec:a posteriori}), and provide two numerical experiments (Section~\ref{sec:numerics optimal control}), where we also demonstrate optimal convergence rate for piecewise linear trial functions if both the optimal control and the corresponding state are sufficiently smooth. 
In Section~\ref{sec: time-dependent domains}, we show that the formulation of general second-order PDEs on time-dependent domains as first-order system induces, as for time-independent domains, a linear isomorphism and provide two numerical experiments (Section~\ref{sec:move numerics}). 
Finally, we draw some conclusion in Section~\ref{sec:conclusion}.

\section{Preliminaries}\label{sec:preliminaries}

\subsection{General notation}\label{sec:notation}
In this work, by $C \lesssim D$ we will mean that $C\ge0$ can be bounded by a multiple of $D\ge0$, independently of parameters on which $C$ and $ D$ may depend. 
Obviously, $C \gtrsim D$ is defined as $C \lesssim D$, and $C\eqsim D$ as $C\lesssim D$ and $C \gtrsim D$.

For normed linear spaces $E$ and $F$, we will denote by $\LL(E,F)$ the normed linear space of bounded linear mappings $E \rightarrow F$. 
For simplicity only, we exclusively consider linear spaces over the scalar field $\R$.

\subsection{Formulation of parabolic PDEs as first-order system}\label{sec:fosls}
Let $\Omega\subset\R^d$, $d\ge 1$, be a Lipschitz domain with boundary $\Gamma:=\partial\Omega$, and $T>0$ a given end time point with corresponding time interval $I:=(0,T)$. 
We abbreviate the space-time cylinder $Q:=I\times\Omega$ with lateral boundary $\Sigma:=I\times\Gamma$. 
We consider the following parabolic PDE with homogeneous Dirichlet boundary conditions

\begin{equation}\label{eq:parabolic pde}
\begin{array}{rcll}
 \partial_t u - \divv_\x
 ({\bf A} \nabla_\x u)+{\bf b}\cdot \nabla_\x u + cu & = & f & \text{ in } Q,\\
 u & = & 0 & \text{ on }\Sigma,\\
 u(0,\cdot) & = & u_0 & \text{ on }\Omega.
\end{array}
\end{equation}
Here, we will require that ${\bf A}={\bf A}^\top\in L_\infty(Q)^{d\times d}$ is uniformly positive, ${\bf b}\in L_\infty(Q)^d$, and $c\in L_\infty(Q)$.
For unique existence of a (weak) solution $u$ of \eqref{eq:parabolic pde}, we recall the following theorem from, e.g., \cite[Theorem~5.1]{ss09}.
\begin{theorem}\label{thm:ss09}
With $X:= L_2(I;H_0^1(\Omega)) \cap H^1(I;H^{-1}(\Omega))$, $Y:= L_2(I;H_0^1(\Omega))$, 
\begin{align*}
 (Bu)(v) := \int_Q \partial_t u \,v+ ({\bf A}\nabla_\x u)\cdot\nabla_\x v + {\bf b}\cdot \nabla_\x u\,v + cuv \d \x \d t
\end{align*}
and $\gamma_0(u):= u(0,\cdot)$ for all $u\in X,v\in Y$, the mapping
\begin{align*}
 \begin{pmatrix}B\\\gamma_0\end{pmatrix}: X\to Y'\times L_2(\Omega), 
\end{align*}
is a linear isomorphism. In particular, there exists a unique $u \in X$ such that
\begin{equation}\label{eq:second-order system}
 \begin{pmatrix}B\\\gamma_0\end{pmatrix} u=\begin{pmatrix}f\\ u_0\end{pmatrix}.
 \end{equation}
\end{theorem}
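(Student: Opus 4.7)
The plan is to apply the Banach–Nečas–Babuška theorem to the bounded linear map $(B,\gamma_0):X\to Y'\times L_2(\Omega)$. This reduces the statement to three items: (a) boundedness of $(B,\gamma_0)$; (b) an a priori bound $\|u\|_X\lesssim \|Bu\|_{Y'}+\|\gamma_0 u\|_{L_2(\Omega)}$ uniformly in $u\in X$; and (c) density of the range in $Y'\times L_2(\Omega)$. First I would record the crucial auxiliary fact that $X\hookrightarrow C(\overline I;L_2(\Omega))$, which follows from the identity $\frac12\frac{d}{dt}\|u(t,\cdot)\|_{L_2(\Omega)}^2=\langle\partial_t u(t,\cdot),u(t,\cdot)\rangle_{H^{-1},H^1_0}$ valid for $u\in X$; this makes $\gamma_0$ well defined and bounded. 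Boundedness of $B$ follows from the duality pairing of $\partial_t u\in L_2(I;H^{-1}(\Omega))$ with $v\in Y$, combined with Cauchy–Schwarz and the $L_\infty$ hypotheses on $\mathbf{A}$, $\mathbf{b}$, $c$.

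For (b), given $u\in X$, I would test $Bu$ with $v=u\in Y$ (noting $X\subset Y$). Integration by parts in time yields $\int_0^T\langle\partial_t u,u\rangle\,dt=\tfrac12\|u(T,\cdot)\|_{L_2}^2-\tfrac12\|\gamma_0 u\|_{L_2}^2$. Using uniform positivity of $\mathbf A$ and Young's inequality to absorb the convective and zeroth-order contributions, followed by Grönwall's lemma, produces
\begin{equation*}
\|u\|_{L_\infty(I;L_2(\Omega))}+\|u\|_Y\lesssim \|Bu\|_{Y'}+\|\gamma_0 u\|_{L_2(\Omega)}.
\end{equation*}
The $\partial_t u$ part of the $X$-norm is then recovered by rearranging $\partial_t u=Bu-\divv_\mathbf{x}(\mathbf{A}\nabla_\mathbf{x}u)\cdot\, -\,\mathbf{b}\cdot\nabla_\mathbf{x}u-cu$ in $Y'$ and bounding each right-hand side term by the already-established estimate.

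For (c), I would invoke a standard Galerkin construction against a basis $\{\phi_k\}$ of $H^1_0(\Omega)$ consisting of eigenfunctions of the Dirichlet Laplacian. For each $n$, one solves the finite-dimensional linear ODE system obtained by projecting \eqref{eq:parabolic pde} onto $V_n=\mathrm{span}\{\phi_1,\ldots,\phi_n\}$ with data $(f,u_0)$ taken from a dense subset of $Y'\times L_2(\Omega)$. The uniform a priori bound from (b) transfers to these Galerkin solutions, so a weak-limit argument in $X$ produces a solution $u$ with $(Bu,\gamma_0 u)=(f,u_0)$; density then extends surjectivity to all of $Y'\times L_2(\Omega)$. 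An open-mapping argument finally upgrades the inverse to a bounded one.

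The main obstacle is step (b): the pairing $\langle\partial_t u,u\rangle$ must be handled rigorously even though $\partial_t u$ is only in $Y'$, which is exactly what the embedding $X\hookrightarrow C(\overline I;L_2(\Omega))$ legitimizes. Once this integration-by-parts identity is in hand, the remaining estimates are a routine combination of ellipticity, Young, and Grönwall, and the Galerkin limit passage in (c) is a standard consequence of the resulting uniform bound.
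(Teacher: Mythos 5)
Your argument is correct and is essentially the classical proof of this well-posedness result; the paper itself gives no proof but simply cites \cite{ss09} (see also \cite{wloka87}), and the proof there rests on exactly the ingredients you list: the embedding $X\hookrightarrow C(\overline{I};L_2(\Omega))$ to make $\gamma_0$ and the integration by parts in time legitimate, the energy estimate obtained by testing with $u$ plus G\"onwall and recovery of $\partial_t u$ from the equation, and a Galerkin/density argument for surjectivity. So this matches the standard route; no gaps.
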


Any $f \in Y'=L_2(I;H^{-1}(\Omega))$ can be written as
\begin{equation} \label{split}
f =f_1+\divv_{\bf x} {\bf f}_2,
\end{equation}
for some $f_1\in L_2(Q)$ and ${\bf f}_2 \in L_2(Q)^d$, where $\int_Q \divv_{\bf x} {\bf f}_2 \,v \d {\bf x}\d t:=-\int_{Q} {\bf f}_2\cdot\nabla_{\bf x} v \d {\bf x} \d t$
for $v \in Y$. With such a decomposition, and ${\bf u}=(u_1,{\bf u}_2)\colon Q\rightarrow \R \times \R^d$, \eqref{eq:second-order system} is equivalent to the first-order system\footnote{The system $\left(\begin{array}{@{}c@{}} \divv {\bf u} - {\bf b}\cdot {\bf A}^{-1} {\bf u}_2 + cu_1 \\ {\bf u}_2 + {\bf A}\nabla_\x u_1 \\  u_1(0,\cdot)\end{array} \right)   =  \left(\begin{array}{@{}c@{}}  f_1 + {\bf b}\cdot {\bf A}^{-1} {\bf f}_2 \\ -{\bf f}_2\\ u_0 \end{array} \right)$ studied in \cite{gs21} leads to \eqref{eq:first-order system} by pre-multiplication with 
$\left(\begin{array}{@{}ccc@{}} I & {\bf b}\cdot {\bf A}^{-1} & 0 \\ 0 & - I & 0 \\ 0 & 0 & I\end{array} \right)$, which is a linear isomorphism on the space $L$ defined in \eqref{eq:L}}.
\begin{align}\label{eq:first-order system}
 G{\bf u}:=  \left(\begin{array}{@{}c@{}} \divv {\bf u} + {\bf b}\cdot \nabla_\x u_1 + cu_1 \\ -{\bf u}_2 - {\bf A}\nabla_\x u_1 \\  u_1(0,\cdot)\end{array} \right) 
  =  \left(\begin{array}{@{}c@{}}  f_1  \\ {\bf f}_2\\ u_0 \end{array} \right)=:{\bf f},\quad u_1|_\Sigma = 0,
\end{align}
as is shown in the next theorem.

\begin{theorem}[{\cite[Theorem~2.3 and Proposition~2.5]{gs21}}]\label{thm:fosls}
The operator $G$ is a linear isomorphism from the space
\begin{align}\label{eq:U}
U := \{{\bf u}=(u_1,{\bf u}_2) \in Y\times L_2(Q)^d\colon {\bf u} \in H(\divv;Q)\} 
\end{align}
(equipped with the corresponding graph norm) to the space 
\begin{align}\label{eq:L}
 L:=L_2(Q)\times L_2(Q)^d \times L_2(\Omega).\footnotemark
\end{align}
\footnotetext{In the original result from~\cite{fk21} for the heat equation, it was shown that $G$ is a linear isomorphism between $U$ and its range in $L$.}%
If $u\in X$ solves \eqref{eq:second-order system}, ${\bf u} = (u,-{\bf A}\nabla_\x u - {\bf f}_2)\in U$ solves \eqref{eq:first-order system}. Conversely, if ${\bf u}=(u_1,{\bf u}_2) \in U$ solves \eqref{eq:first-order system}, then $u=u_1$ solves \eqref{eq:second-order system}.
\end{theorem}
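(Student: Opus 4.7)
The plan is to deduce the isomorphism property of $G$ from Theorem~\ref{thm:ss09} by establishing a one-to-one correspondence between solutions of the first-order system~\eqref{eq:first-order system} and the second-order formulation~\eqref{eq:second-order system}. Boundedness of $G$ combined with the bounded inverse theorem then yields continuous invertibility.

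First, I would verify that $G \in \LL(U,L)$. For ${\bf u}=(u_1,{\bf u}_2) \in U$, the first two components of $G{\bf u}$ are manifestly in $L_2(Q)$: the first uses $\divv {\bf u} \in L_2(Q)$ (by definition of $U$) together with ${\bf b}\cdot \nabla_\x u_1,\, cu_1 \in L_2(Q)$ (by the $L_\infty$ hypotheses and $u_1 \in Y$); the second is immediate. For the third component $u_1(0,\cdot)$, I would observe that the distributional identity $\partial_t u_1 = \divv{\bf u} - \divv_\x {\bf u}_2$ shows $\partial_t u_1 \in L_2(I;H^{-1}(\Omega))$, so in fact $u_1 \in X$, and the continuous embedding $X \hookrightarrow C(\bar I;L_2(\Omega))$ then makes the trace well-defined and bounded by $\|{\bf u}\|_U$.

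Next, I would prove the equivalence asserted at the end of the theorem by direct substitution. Given $u \in X$ solving~\eqref{eq:second-order system} with $f=f_1+\divv_\x {\bf f}_2$, the choice ${\bf u}_2 := -{\bf A}\nabla_\x u - {\bf f}_2$ makes the second and third lines of~\eqref{eq:first-order system} trivial, while the first line reduces to
\begin{align*}
\partial_t u + \divv_\x {\bf u}_2 + {\bf b}\cdot \nabla_\x u + cu &= \partial_t u - \divv_\x({\bf A}\nabla_\x u) + {\bf b}\cdot \nabla_\x u + cu - \divv_\x {\bf f}_2 \\
&= f - \divv_\x {\bf f}_2 = f_1.
\end{align*}
Conversely, from ${\bf u} \in U$ solving~\eqref{eq:first-order system}, the second line yields ${\bf u}_2 = -{\bf A}\nabla_\x u_1 - {\bf f}_2$; substituting into the first line and testing against $v \in Y$ recovers $B u_1 = f$, with the initial and boundary conditions inherited from the remaining equations.

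Injectivity of $G$ then follows because $G{\bf u}=0$ forces $u_1$ to solve the homogeneous second-order problem, hence $u_1=0$ by Theorem~\ref{thm:ss09}, whence ${\bf u}_2 = -{\bf A}\nabla_\x u_1 = 0$. For surjectivity, given $(f_1,{\bf f}_2,u_0) \in L$, apply Theorem~\ref{thm:ss09} to $(f,u_0)$ with $f:=f_1+\divv_\x {\bf f}_2 \in Y'$ and set ${\bf u}_2 := -{\bf A}\nabla_\x u - {\bf f}_2$; the bounded inverse theorem completes the argument. The main obstacle I foresee is the careful identification that, for elements of $U$, the space-time distributional identity $\divv{\bf u}=\partial_t u_1+\divv_\x {\bf u}_2$ genuinely places $\partial_t u_1$ in $L_2(I;H^{-1}(\Omega))$ (and hence $u_1$ in $X$ with a well-defined trace at $t=0$). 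This is the one place where the ``full'' space-time divergence appearing in the definition of $U$ must be reconciled with the purely spatial duality used in the definition of $Y'$, which I would handle via a density argument using smooth tensor-product test functions on $Q$. Everything else is essentially bookkeeping from Theorem~\ref{thm:ss09}.
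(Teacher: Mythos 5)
Your argument is correct and follows the standard route that the paper itself delegates to the cited reference \cite{gs21}: boundedness of $G$ via the trace identification (which is exactly the content of Lemma~\ref{lem:U2X}), the bijective correspondence with the second-order problem of Theorem~\ref{thm:ss09} through the substitution ${\bf u}_2=-{\bf A}\nabla_\x u-{\bf f}_2$, and the bounded inverse theorem. The one delicate point you correctly single out --- reconciling the space-time distributional divergence with the $Y'$-duality so that $u_1\in X$ and the trace $u_1(0,\cdot)$ is well defined --- is precisely where the cited proof also invests its effort, so nothing essential is missing.
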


Notice that $G{\bf u}={\bf f}$ is equivalent to the variational problem
\begin{align*}
\langle G{\bf u}, G{\bf v}\rangle_L=\langle {\bf f},G{\bf v}\rangle_{L} \quad\text{for all } {\bf v} \in U.
\end{align*}
Since the bilinear form at the left-hand side is bounded, symmetric and coercive, it provides the ideal setting for the application of Galerkin discretizations. 
The Galerkin solution from the employed trial space is a quasi-best approximation to ${\bf u}$ w.r.t.~$\|\cdot\|_U$.
For any approximation $\widetilde{\bf u}=(\widetilde{u}_1,\widetilde{\bf u}_2)$ of ${\bf u}$ we have the computable a posteriori error estimator $\|{\bf u}-\widetilde{\bf u}\|_U \eqsim \|{\bf f}-G\widetilde{\bf u}\|_L$.
Moreover, the following lemma shows that $\|u_1-\widetilde{u}_1\|_X \lesssim \|{\bf u}-\widetilde{\bf u}\|_U$.

\begin{lemma}[{\cite[Lemma~2.2]{gs21}}]\label{lem:U2X}
The mapping ${\bf u}\mapsto u_1$ belongs to $\LL(U,X)$.
\end{lemma}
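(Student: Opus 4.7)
My plan is to unpack the $U$-norm and the $X$-norm and exhibit the only nontrivial bound, namely the estimate of $\partial_t u_1$ in $Y'=L_2(I;H^{-1}(\Omega))$.

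First, the graph norm on $U$ reads
\begin{align*}
\|{\bf u}\|_U^2 = \|u_1\|_Y^2 + \|{\bf u}_2\|_{L_2(Q)^d}^2 + \|\divv {\bf u}\|_{L_2(Q)}^2,
\end{align*}
while on $X$ the natural choice is $\|u_1\|_X^2 = \|u_1\|_Y^2 + \|\partial_t u_1\|_{Y'}^2$. The $Y$-part is immediate since $\|u_1\|_Y \le \|{\bf u}\|_U$ by definition, so the whole task reduces to controlling $\|\partial_t u_1\|_{Y'}$ by $\|{\bf u}\|_U$.

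The key identity is the distributional one
\begin{equation*}
\partial_t u_1 = \divv {\bf u} - \divv_\x {\bf u}_2 \quad \text{in } \mathcal{D}'(Q),
\end{equation*}
which follows by testing ${\bf u} \in H(\divv;Q)$ against $\varphi \in C_c^\infty(Q)$: since the space-time divergence acts as $\nabla_{(t,\x)}\varphi = (\partial_t\varphi,\nabla_\x\varphi)$, one gets $\int_Q (\divv {\bf u})\varphi = -\int_Q u_1 \partial_t\varphi - \int_Q {\bf u}_2\cdot\nabla_\x\varphi$. The step I would take next is, for arbitrary $v \in Y$, to approximate by smooth compactly supported functions and pass to the limit to conclude
\begin{align*}
\int_0^T \dual{\partial_t u_1(t)}{v(t)}\,dt
= \int_Q (\divv {\bf u})\, v\, d\x\,dt + \int_Q {\bf u}_2 \cdot \nabla_\x v \, d\x\,dt,
\end{align*}
where the second term comes from integrating $-\divv_\x {\bf u}_2$ by parts in space (which is legitimate because $u_1|_\Sigma = 0$ is encoded in $Y$ and ${\bf u}_2 \in L_2(Q)^d$, so no regularity of $\divv_\x {\bf u}_2$ alone is needed).

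Applying Cauchy--Schwarz to both terms on the right-hand side yields
\begin{align*}
\Big|\int_0^T \dual{\partial_t u_1(t)}{v(t)}\,dt\Big|
\le \bigl(\|\divv {\bf u}\|_{L_2(Q)} + \|{\bf u}_2\|_{L_2(Q)^d}\bigr)\,\|v\|_Y,
\end{align*}
hence $\|\partial_t u_1\|_{Y'} \lesssim \|{\bf u}\|_U$, and combining with the trivial bound on $\|u_1\|_Y$ gives $\|u_1\|_X \lesssim \|{\bf u}\|_U$. Linearity of ${\bf u}\mapsto u_1$ is obvious, so this settles membership in $\LL(U,X)$. The only point requiring slight care is the density/distribution argument in the second paragraph to justify shifting $\divv_\x$ off ${\bf u}_2$ and onto $v$; everything else is a direct estimate.
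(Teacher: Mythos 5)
Your proof is correct and follows essentially the same route as the cited argument in \cite[Lemma~2.2]{gs21} (the present paper only cites that result): the only nontrivial bound is on $\|\partial_t u_1\|_{Y'}$, and it follows from the distributional identity $\partial_t u_1=\divv{\bf u}-\divv_\x{\bf u}_2$ together with the fact that $\divv_\x{\bf u}_2\in Y'$ acts as $v\mapsto-\int_Q{\bf u}_2\cdot\nabla_\x v$, so that Cauchy--Schwarz gives $\|\partial_t u_1\|_{Y'}\le\|\divv{\bf u}\|_{L_2(Q)}+\|{\bf u}_2\|_{L_2(Q)^d}\lesssim\|{\bf u}\|_U$. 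One cosmetic slip: the vanishing trace that eliminates boundary terms is that of the test function $v\in Y$ (equivalently, the density of $C_c^\infty(Q)$ in $Y$), not the condition $u_1|_\Sigma=0$.
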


Finally in this section, we provide more information about the splitting \eqref{split}.
\begin{remark} \label{rem:splitting}
For any $f \in Y'$, the unique solution of 
\begin{align*}
\argmin_{\{(f_1,{\bf f}_2) \in L_2(Q) \times L_2(Q)^d\colon f_1+\divv_{\bf x} {\bf f}_2=f\}} \|f_1\|_{L_2(Q)}^2+\|{\bf f}_2\|_{L_2(Q)^d}^2
\end{align*}
is given by $(f_1,{\bf f}_2)=(w,-\nabla_{\bf x} w)$ where $\langle w,v\rangle_Y:=\langle w,v\rangle_{L_2(Q)} + \langle \nabla_\x w,\nabla_\x v\rangle_{L_2(Q)}=f(v)$ for all $v \in Y$, i.e., $w \in Y$ is the Riesz lift of $f$, and
\begin{align*}
 \|f_1\|_{L_2(Q)}^2+\|{\bf f}_2\|_{L_2(Q)^d}^2=\|f\|^2_{Y'}.
\end{align*}
Indeed, the last property is a consequence of $\|w\|_Y=\|f\|_{Y'}$, whereas for arbitrary $(f_1,{\bf f}_2) \in L_2(Q) \times L_2(Q)^d$ with $f_1+\divv_{\bf x} {\bf f}_2=f$, 
$\|f\|^2_{Y'} \leq \|f_1\|_{L_2(Q)}^2+\|{\bf f}_2\|_{L_2(Q)^d}^2$ follows from
$f(v)=\langle f_1,v\rangle_{L_2(Q)}-\langle {\bf f}_2,\nabla_{\bf x} v\rangle_{L_2(Q)^d} \leq \sqrt{\|f_1\|_{L_2(Q)}^2+\|{\bf f}_2\|_{L_2(Q)^d}^2}\,\|v\|_Y$.

Finally, notice that if $f =f_1+\divv_{\bf x} {\bf f}_2$ with $ \|f_1\|_{L_2(Q)}^2+\|{\bf f}_2\|_{L_2(Q)^d}^2 \eqsim \|f\|_{Y'}^2$, then for the solutions ${\bf u}$ of \eqref{eq:first-order system} and $u$ of \eqref{eq:second-order system},
it holds that
$\|{\bf u}\|^2_U \eqsim \|{\bf f}\|^2_L \eqsim \|f\|^2_{Y'}+\|u_0\|_{L_2(\Omega)}^2 \eqsim \|u\|_X^2$.
\end{remark}

\section{Parameter-dependent problems}\label{sec:parameter problems}

In this section, we consider coefficients ${\bf A}={\bf A}[\mmu], {\bf b}={\bf b}[\mmu], c=c[\mmu]$ and right-hand side 
${\bf f}[\mmu]=(f_1[\mmu],{\bf f}_2[\mmu],u_0[\mmu])$ that depend additionally on a tuple of parameters
${\bf \mmu}$ in a set $\PP\subset \R^p$. 
We assume that the coefficients and the right hand sides are \emph{parameter-separable} in the sense that
\begin{equation}\label{eq:parameter separable}
\begin{array}{rcl rcl rcl}
 {\bf A}[\mmu] &=& \sum_{q = 1}^{n_{\bf A}} \theta_q^{\bf A}(\mmu) {\bf A}_q, \quad
 &{\bf b}[\mmu] &=& \sum_{q = 1}^{n_{\bf b}} \theta_q^{\bf b}(\mmu) {\bf b}_q, \quad
 &c[\mmu] &=& \sum_{q = 1}^{n_c} \theta_q^c(\mmu) c_q,
 \\
 f_1[\mmu] &=& \sum_{q = 1}^{n_{f_1}} \theta_q^{f_1}(\mmu) f_{1,q}, \quad
 &{\bf f}_2[\mmu] &=& \sum_{q = 1}^{n_{{\bf f}_2}} \theta_q^{\bf f_2}(\mmu) {\bf f}_{2,q}, \quad
 &u_0[\mmu] &=& \sum_{q = 1}^{n_{u_0}} \theta_q^{u_0}(\mmu) u_{0,q}
\end{array}
\end{equation}
for some functions $\theta_q^{(\cdot)}:\PP\to \R$ and ${\bf A}_q \in L_\infty(\Omega)^{d\times d}, {\bf b}_q\in L_\infty(\Omega)^d, c_{q}\in L_\infty(\Omega)$ and $f_{1,q}\in L_2(Q), {\bf f}_{2,q} \in L_2(Q)^d, u_{0,q}\in L_2(\Omega)$. 
Let $F=F[\mmu]\in U'$  be some given quantity of interest that is also parameter-separable, i.e., $F[\mmu] = \sum_{q = 1}^{n_F} \theta_q^F(\mmu) F_q$ with $\theta_q^F:\PP\to \R$ and $F_q\in U'$. 
After a possibly computationally expensive \emph{offline phase}, we want to be able to instantly compute an approximation of $F[\mmu]({\bf u}[\mmu])$ for different $\mmu\in\PP$ in the so-called \emph{online phase}. 

\subsection{Reduced basis method}\label{sec:reduced basis}
Given some parameter $\mmu$, the idea of the reduced basis method is to compute an approximation ${\bf u}^N[\mmu]$ of ${\bf u}[\mmu]$ from the (low-dimensional) span of some \emph{snapshots} $\{{\bf u}[\mmu^{(1)}],\dots,{\bf u}[\mmu^{(N)}]\}$.

Instead of ${\bf u}[\mmu^{(i)}]$, very accurate approximations ${\bf u}^\delta[\mmu^{(i)}]$ thereof are computed in the offline phase. We will choose ${\bf u}^\delta[\mmu^{(i)}]$ as the best approximation to ${\bf u}[\mmu^{(i)}]$ w.r.t.~$\|G[\mmu^{(i)}] (\cdot)\|_L$ from some high-dimensional subspace $U^\delta\subset U$, i.e.,
as the solution of the Galerkin system $\langle G[\mmu^{(i)}] {\bf u}^\delta[\mmu^{(i)}]-{\bf f}[\mmu^{(i)}], G[\mmu^{(i)}] {\bf v}^\delta \rangle_L=0$ for all ${\bf v}^\delta \in U^\delta$.
One easily checks that the parameter-separability~\eqref{eq:parameter separable} of the coefficients and the right-hand sides implies parameter-separability of the bilinear form  
\begin{align*}
 \dual{G[\mmu](\cdot)}{G[\mmu](\cdot)}_L = \sum_{q=1}^{n_b} \theta_q^b(\mmu) b_q(\cdot,\cdot), 
\end{align*}
the linear form 
\begin{align*}
 \dual{{\bf f}[\mmu]}{G[\mmu](\cdot)}_L
 = \sum_{q=1}^{n_l} \theta_q^l(\mmu) l_q(\cdot),
\end{align*}
and the squared norm 
\begin{align*}
 \norm{G[\mmu]({\bf u})}{L}^2 
 = \norm{{\bf f}[\mmu]}{L}^2
 = \sum_{q=1}^{n_s} \theta_q^s(\mmu), 
\end{align*}
where again $\theta_q^{(\cdot)}:\PP\to\R$, and each $b_q:U\times U\to \R$ is a continuous bilinear form and each $l_q:U\to\R$ is a continuous linear form. 
Besides the functions $ {\bf u}^{\delta}[\mmu^{(i)}]$, which form the reduced basis, in the offline phase we further compute the matrices and vectors
\begin{align*}
 \big(b_q( {\bf u}^\delta[\mmu^{(j)}], {\bf u}^\delta[\mmu^{(i)}])\big)_{i,j=1}^N, \quad \big(l_q( {\bf u}^\delta[\mmu^{(i)}])\big)_{i=1}^N, \quad \big(F_q( {\bf u}^\delta[\mmu^{(i)}])\big)_{i=1}^N,
\end{align*}

In the online phase, we compute ${\bf u}^N[\mmu]$ as the best approximation to ${\bf u}[\mmu]$ w.r.t.~\mbox{$\|G[\mmu] (\cdot)\|_L$}
from the span of the reduced basis, i.e., as the solution of a low-dimensional Galerkin system.
Assuming that the snapshots ${\bf u}^\delta[\mmu^{(1)}],$ $\dots$, ${\bf u}^\delta[\mmu^{(N)}]$ are linearly independent, the corresponding coefficient vector ${\bf c}^N[\mmu]$ of ${\bf u}^N[\mmu]$ with respect to this basis is just given by 
\begin{align*}
 {\bf c}^N[\mmu] = \Big(\sum_{q=1}^{n_b} \theta_q^b(\mu) \big(b_q( {\bf u}^{\delta}[\mmu^{(j)}], {\bf u}^{\delta}[\mmu^{(i)}])\big)_{i,j=1}^N\Big)^{-1}
  \Big(\sum_{q=1}^{n_l} \theta_q^l(\mu)\big(l_q( {\bf u}^{\delta}[\mmu^{(i)}])\big)_{i=1}^N\Big).
\end{align*}
We stress that the computational effort to solve this linear system depends only on $N$ and \emph{not} on ${\rm dim}(U^\delta)$ as the involved matrices and vectors have been already computed in the offline phase. 
Then, the quantity of interest 
is given by 
\begin{align*}
 F[\mmu]({\bf u}^N[\mmu]) =\sum_{q=1}^{n_F} \theta_q^F(\mmu) \Big( \big(F_q( {\bf u}^\delta[\mmu^{(i)}])\big)_{i=1}^N \cdot {\bf c}^N[\mmu]\Big).
\end{align*}
Again, the involved vectors $F_q( {\bf u}^\delta[\mmu^{(i)}])\big)_{i=1}^N$ have been precomputed in the offline phase. 
Finally, we can even instantly estimate the discretization error in the online phase by
\begin{align*}
\norm{{\bf u}[\mmu] - {\bf u}^N[\mmu]}{U}^2 &\eqsim \norm{G[\mmu] ({\bf u}[\mmu]) - G[\mmu]({\bf u}^N[\mmu])}{L}^2
 \\
 &= \norm{G[\mmu]({\bf u}[\mmu])}{L}^2 - \dual{G[\mmu]({\bf u}[\mmu])}{G[\mmu]({\bf u}^N[\mmu])}_L 
 \\
 &= \sum_{q=1}^{n_s} \theta_q^s(\mmu) -   \sum_{q=1}^{n_l} \theta_q^l(\mmu)\Big(\big(l_q( {\bf u}^{\delta}[\mmu^{(i)}])\big)_{i=1}^N \cdot {\bf c}^N[\mmu]\Big),
\end{align*}
The constants hidden in the $\eqsim$-symbol depend only on the $L_\infty$-norms of the coefficients ${\bf A}[\mmu],{\bf b}[\mmu],c[\mmu]$ and the smallest eigenvalue of ${\bf A}[\mmu]$.

\subsection{Basis generation}\label{sec:greedy algorithm}
It remains to explain how to determine a suitable reduced basis $\{{\bf u}^\delta[\mmu^{(1)}],$ $\dots$, ${\bf u}^\delta[\mmu^{(N)}]\}$. 
Given some sufficiently large training set $\PP_{\rm train}\subseteq\PP$ and some tolerance $\epsilon_{\rm tol}>0$, 
we employ a greedy algorithm that starting with ${\bf u}^0[\mmu] := 0$, $\mmu\in\PP$, iteratively adds the snapshot
\begin{align*}
 {\bf u}^\delta\big[\operatorname*{argmax}\limits_{\mmu\in\PP_{\rm train}}\norm{G[\mmu] ({\bf u}[\mmu]) - G[\mmu]({\bf u}^N[\mmu])}{L}\big]
\end{align*}
to the reduced basis and increments $N$ by one until 
\begin{align*}
 \max_{\mmu\in\PP_{\rm train}} \norm{G[\mmu] ({\bf u}[\mmu]) - G[\mmu]({\bf u}^N[\mmu])}{L}\le\epsilon_{\rm tol}.
\end{align*}
Note that this procedure terminates at most after $\#\PP_{\rm train}$ steps and provides indeed a basis if the discretization error in $U^\delta$ is negligible. 
For possible choices of the training set $\PP_{\rm train}$, we refer, e.g., to~\cite[Remark~2.44]{haasdonk17}. 

\subsection{Numerical experiment}\label{sec:reduced 1d}
We consider the example from \cite{gmu17}: 
Let $\Omega=(0,1)$ and $T:=0.3$. 
We consider the parameter set $\PP:=[0.5,1.5] \times [0,1] \times [0,1]\subset \R^3$ with ${\bf A}[\mmu] := \mu_1$, ${\bf b}[\mmu] :=  \mu_2$, and $c[\mmu] = \mu_3$. 
Moreover, we choose the right-hand sides $f_1$, ${\bf f}_2$, and $u_0$ independently of the parameters with 
\begin{align*}
 f_1(t,x):=\sin(2\pi x)\big((4\pi^2 + 0.5) \cos(4\pi t) - 4\pi\sin(4\pi t)\big) + \pi \cos(2\pi x)\cos(4\pi t)
\end{align*}
on the space-time cylinder $Q=(0,1)^2$, ${\bf f}_2 := 0$, and $u_0(x) := \sin(2\pi x)$ on $\Omega$, 
which corresponds to the solution $u[(1,0.5,0.5)](t,x) = \sin(2\pi x)\cos(4\pi t)$. 

\begin{center}
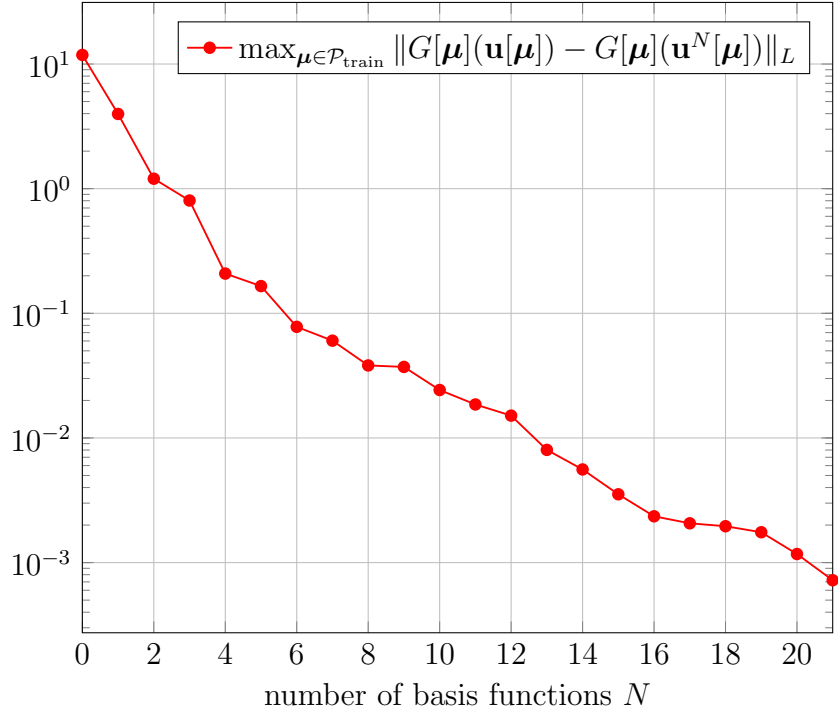
\begin{figure}
\begin{tikzpicture}
\begin{semilogyaxis}[   
width = 0.7\textwidth,
clip = true,
xmin=0,xmax=21,
xlabel= {number of basis functions $N$},
grid = major,
legend pos = north east,
legend entries = {$ \max_{\mmu\in\PP_{\rm train}} \norm{G[\mmu] ({\bf u}[\mmu]) - G[\mmu]({\bf u}^N[\mmu])}{L}$\\} 
]
\addplot[line width = 0.25mm,color=red,mark=*] table[x=N,y=maxtrain, col sep=comma] {data/stred_off_D1_order3_nx64_n17.csv};
\end{semilogyaxis}
\end{tikzpicture}
\caption{\label{fig:reduced off 1d} Approximation error of greedy algorithm for reduced basis method of Section~\ref{sec:reduced 1d}.}
\end{figure}
\end{center}

\begin{center}
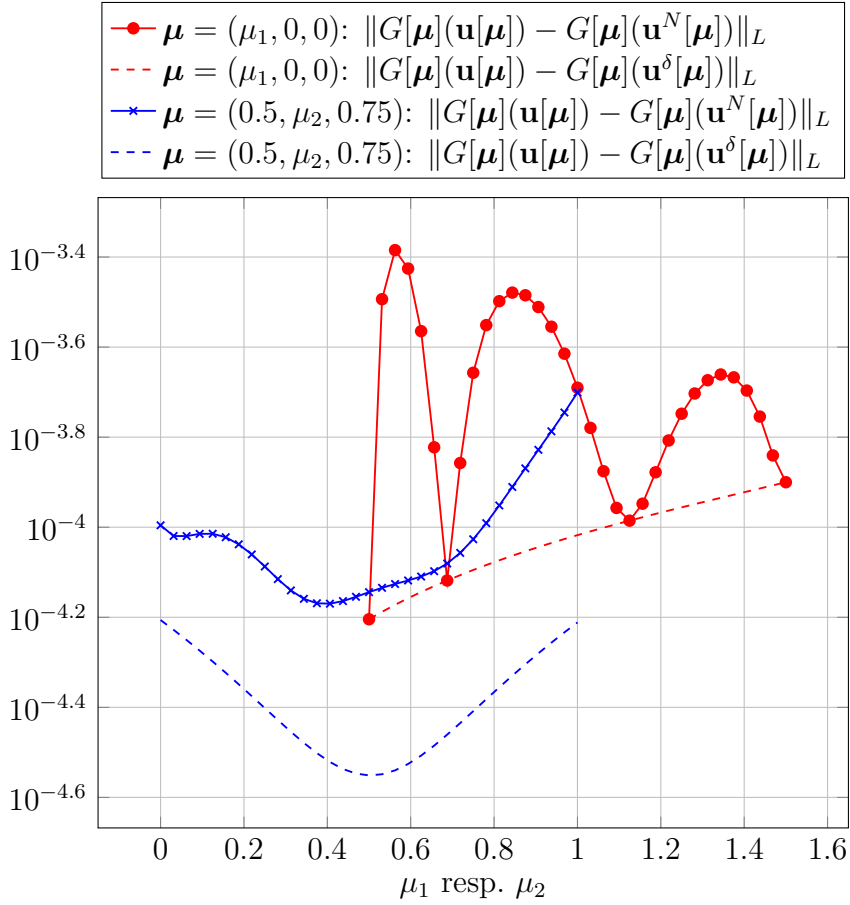
\begin{figure}
\begin{tikzpicture}
\begin{semilogyaxis}[
width = 0.7\textwidth,
clip = true,
xlabel= {$\mu_1$ resp.\ $\mu_2$},
grid = major,
legend entries = {$\mmu = (\mu_1,0,0)$: $\norm{G[\mmu] ({\bf u}[\mmu]) - G[\mmu]({\bf u}^N[\mmu])}{L}$\\ $\mmu = (\mu_1,0,0)$: $\norm{G[\mmu] ({\bf u}[\mmu]) - G[\mmu]({\bf u}^\delta[\mmu])}{L}$ \\%
$\mmu = (0.5,\mu_2,0.75)$: $\norm{G[\mmu] ({\bf u}[\mmu]) - G[\mmu]({\bf u}^N[\mmu])}{L}$\\$\mmu = (0.5,\mu_2,0.75)$: $\norm{G[\mmu] ({\bf u}[\mmu]) - G[\mmu]({\bf u}^\delta[\mmu])}{L}$\\} ,
legend style={at={(0.5,1.03)},anchor=south},
legend cell align={left}
]
\addplot[line width = 0.25mm,color=red,mark=*] table[x=A,y=est_A, col sep=comma] {data/stred_on_D1_order3_nx64_n17.csv};
\addplot[dashed,line width = 0.25mm,color=red] table[x=A,y=err_A, col sep=comma] {data/stred_on_D1_order3_nx64_n17.csv};
\addplot[line width = 0.25mm,color=blue,mark=x] table[x=b,y=est_b, col sep=comma] {data/stred_on_D1_order3_nx64_n17.csv};
\addplot[dashed,line width = 0.25mm,color=blue] table[x=b,y=err_b, col sep=comma] {data/stred_on_D1_order3_nx64_n17.csv};
\end{semilogyaxis}
\end{tikzpicture}
\caption{\label{fig:reduced on 1d} Approximation errors at $\mmu = (\mu_1,0,0)$ with $\mu_1\in[0.5,1.5]$ (red) and $\mmu = (0.5,\mu_2,0.75)$ with $\mu_2\in[0,1]$  (blue) for reduced basis method of Section~\ref{sec:reduced 1d} with $N=21$.}
\end{figure}
\end{center}

We divide both $\Omega$ and $I=(0,T)$ into $2^6$ subintervals and choose $U^\delta$ to be the $2$-fold Cartesian product of continuous piecewise bi-cubic functions\footnote{The reason why we consider bi-cubic instead of bi-affine elements as in \cite{gmu17} is that we measure in a stronger norm but still want the approximation error in the space $U^\delta$ to be negligible.},
with the first coordinate space restricted by homogeneous Dirichlet boundary conditions on $\Sigma$.
The training set $\PP_{\rm train}$ is chosen as $17$ equidistantly distributed points in $\PP$ in each direction, and the tolerance $\epsilon_{\rm tol}$ is chosen as $10^{-3}$. 
Figure~\ref{fig:reduced off 1d} displays the approximation error $ \max_{\mmu\in\PP_{\rm train}} \norm{G[\mmu] ({\bf u}[\mmu]) - G[\mmu]({\bf u}^N[\mmu])}{L}$ throughout the greedy algorithm of the offline phase with the final $N=21$, where the $y$-axis is scaled logarithmically. 
As expected from~\cite{bcddpw11}, we observe exponential convergence. 
In Figure~\ref{fig:reduced on 1d}, we plot the discretization error $\norm{G[\mmu] ({\bf u}[\mmu]) - G[\mmu]({\bf u}^N[\mmu])}{L}$ of the reduced basis method applied for $\mmu$ in the test sets $\set{(\mu_1,0,0)}{\mu_1\in [0.5,1.5]}$ and $\set{(0.5,\mu_2,0.75)}{\mu_2\in [0,1]}$. 
For comparison, we also plot the best possible error $\norm{G[\mmu] ({\bf u}[\mmu]) - G[\mmu]({\bf u}^\delta[\mmu])}{L}$ {that one could hope for with the reduced basis method. 
Although the greedy algorithm only guarantees that these errors are below $\epsilon_{\rm tol}=10^{-3}$ on the training set $\PP_{\rm train}$, this bound 
appears to hold also true on the considered test sets. 

While a fair comparison to the space-time results of \cite{gmu17} is hard, we only mention that the considered errors, although measured in the stronger norm $\norm{G(\cdot)}{L}\simeq\norm{\cdot}{U}$, are of similar magnitude as the errors of \cite{gmu17} measured in (an approximation of) the norm $\norm{\cdot}{X}$.
The number of reduced basis functions to achieve an accuracy of $\epsilon_{\rm tol}=10^{-3}$ in \cite{gmu17} is given by $N=12$.
It is also notable that, in contrast to the estimator of \cite{gmu17}, which is based on the residual corresponding to (the first components of) ${\bf u}^\delta[\mmu] - {\bf u}^N[\mmu]$, the estimator considered here is  provably equivalent to the actual norm of the error $\|{\bf u}[\mmu] - {\bf u}^N[\mmu]\|_U$ and can also be efficiently computed in the online phase.

\section{Optimal control problems}\label{sec:optimal control}

Let ${\bf f}^\star=(f_1^\star,{\bf f}_2^\star,u_0^\star) \in L$ be fixed, and let $Z$ be a Hilbert space that is continuously embedded in $L$.
We consider \eqref{eq:first-order system} with ${\bf f}=(f_1,{\bf f}_2,u_0)={\bf f}^\star+{\bf z}$, where ${\bf z} \in Z$ is a \emph{control} variable.
The corresponding solution ${\bf u} \in U$ is then called \emph{state}. 
For $W$ being a further Hilbert space, $F\in\LL(U,W)$, \emph{desired observation} $w^\star\in W$, and 
 a parameter $\varrho>0$, we want to minimize the functional 
\begin{subequations}\label{eq:optimal control2}
\begin{align}
J({\bf u},{\bf z}) := \frac12 \norm{F{\bf u} - w^\star}{W}^2 + \frac{\varrho}{2}\norm{{\bf z}}{Z}^2
\end{align}
over
\begin{align}
 \set{({\bf u},{\bf z}) \in U \times Z}{G{\bf u}={\bf f}^\star+{\bf z}}. 
\end{align}
\end{subequations}

\begin{remark}
Let $F=\widetilde{F} \circ ({\bf u} \mapsto u_1)$ for some $\widetilde{F}\in\LL(X,W)$ and $Z=L_2(Q)\times L_2(Q)^d\times Z_3$ for some continuously embedded subspace $Z_3$ of $L_2(\Omega)$.
Then, \eqref{eq:optimal control2}  is equivalent to minimizing
\begin{subequations}
\label{eq:optimal control1}
\begin{align} \label{eq:optimal control1a}
\frac12\|\widetilde{F} u-w^\star\|_W^2+\frac{\varrho}{2}(\|\widetilde{z}\|^2_{Y'}+\|z_3\|_{Z_3}^2)
\end{align}
over
\begin{align}\label{eq:optimal control1b}
\set{(u,\widetilde{z},z_3) \in X \times Y'\times Z_3} {(B,\gamma_0)u=(f_1^\star+\divv_{\bf x} {\bf f}_2^\star+ \widetilde{z},u_0^\star+z_3)}.
\end{align}
\end{subequations} 
Indeed, from Theorem~\ref{thm:fosls} we know that for $\widetilde{z}=z_1+\divv_{\bf x} {\bf z}_2$, 
$(u,\widetilde{z},z_3)$ is in the set \eqref{eq:optimal control1b} if and only if $({\bf u},{\bf z})$ with ${\bf u}=(u,-{\bf A}\nabla_{\bf x} u-{\bf f}_2^\star-{\bf z}_2)$ is in the set \eqref{eq:optimal control1b}.
Knowing that for any $(z_1,{\bf z}_2) \in L_2(Q)\times L_2(Q)^d$, $\widetilde{z}:=z_1+\divv_{\bf x} {\bf z}_2 \in Y'$ and conversely, any $\widetilde{z} \in Y'$ can be written as $\widetilde{z}=z_1+\divv_{\bf x} {\bf z}_2 \in Y'$, where, as shown in Remark~\ref{rem:splitting}, for the pair $(z_1,{\bf z}_2) \in L_2(Q)\times L_2(Q)^d$ with smallest $\|z_1\|_{L_2(Q)}^2+\|{\bf z}_2\|_{L_2(Q)^d}^2$ it holds that $\|\widetilde{z}\|_{Y'}^2= \|z_1\|_{L_2(Q)}^2+\|{\bf z}_2\|_{L_2(Q)^d}^2$, the proof of equivalence of \eqref{eq:optimal control2} and \eqref{eq:optimal control1} is completed.

In particular, this shows that~\eqref{eq:optimal control2} covers the recently considered optimal control problem from~\cite{lsty21b}, where $\widetilde F={\rm Id}:X\to L_2(Q)$ and $Z_3=\{0\}$.
\end{remark}

\subsection{Formulation as saddle-point problem}\label{sec:saddle point}

Writing \eqref{eq:first-order system} with right-hand side ${\bf f}={\bf f}^\star+{\bf z}$ as $\langle G {\bf u},G {\bf v}\rangle_L=\langle {\bf f}^\star+{\bf z},G {\bf v}\rangle_L$ for all ${\bf v} \in U$, and
introducing the bilinear forms
$$
a({\bf u},{\bf v}) := \dual{G{\bf u}}{G{\bf v}}_L,\,\,
b({\bf z},{\bf v}) := -\dual{{\bf z}}{G{\bf v}}_L,\,\,
d({\bf u},{\bf v}) := \dual{F {\bf u}}{F{\bf v}}_W,\,\,
e({\bf z},{\bf y}) := \varrho \,\dual{{\bf z}}{{\bf y}}_Z,
$$
and the linear forms
$$
f^\star({\bf v}) := \dual{{\bf f}^\star}{G{\bf v}}_L,\quad
g^\star\big(({\bf u},{\bf z})\big) := \dual{F {\bf u}}{w^\star}_W, 
$$
for ${\bf u},{\bf v}\in U$ and ${\bf z}, {\bf y}\in Z$, and noting that the constant term $\norm{w^\star}{W}^2$ in $\norm{F{\bf u} - w^\star}{W}^2$ can be neglected for minimization,  the optimal control problem~\eqref{eq:optimal control2} can be rewritten as 
\begin{align}\label{eq:optimal control abstract}
  \operatorname*{argmin}\limits_{\{({\bf u},{\bf z})\in U\times Z:\, a({\bf u},{\bf v}) + b({\bf z},{\bf v}) = f^\star({\bf v})\, ({\bf v}\in U)\}} 
  \frac12 d({\bf u},{\bf u}) + \frac12 e({\bf z},{\bf z}) - g^\star\big(({\bf u},{\bf z})\big). 
\end{align}
The following lemma implies well-posedness and equivalence to a saddle-point problem.

\begin{lemma}\label{lem:optimal control}
Let $U$ and $Z$ be \emph{arbitrary} Hilbert spaces, $a:U\times U\to \R$, $b: Z\times U\to \R$, $d:U\times U\to\R$, $e:Z\times Z\to\R$ \emph{arbitrary} bounded bilinear forms such that  $a$ and $e$ are coercive, $d$ is positive semi-definite, and $f:U\to\R$, $g^\star:U\times Z\to\R$ are bounded linear forms. 
Then there exists a unique $({\bf u},{\bf z},{\bf p})\in U\times Z\times U$ that solves the saddle-point problem
\begin{equation}\label{eq:saddle}
\begin{array}{lcll}
 d({\bf u},{\bf v}) + e({\bf z},{\bf y}) + a({\bf v},{\bf p}) + b({\bf y},{\bf p}) & = & g^\star\big(({\bf v},{\bf y})\big) & \text{for all }({\bf v},{\bf y})\in U\times Z,
 \\
a({\bf u},{\bf q}) + b ({\bf z},{\bf q}) & = & f^\star({\bf q}) & \text{for all }{\bf q}\in U,
\end{array}
\end{equation}
and 
\begin{align}
 \norm{{\bf u}}{U} + \norm{{\bf z}}{Z}  + \norm{{\bf p}}{U} \le \const{stab} \big(\norm{f^\star}{U'} + \norm{g^\star}{U' \times Z'}  \big)
\end{align}
with a constant $\const{stab}>0$ that depends only on the continuity constants of $a,b,d,e$ and the coercivity constants of $a,e$.

Assuming symmetry of $d$ and $e$, the pair $({\bf u},{\bf z})\in U\times Z$ is the unique solution 
of \eqref{eq:optimal control abstract}. In this setting, ${\bf p}\in U$ is known as the \emph{co-state}.
\end{lemma}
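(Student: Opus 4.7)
My plan is to recognize \eqref{eq:saddle} as a standard mixed variational problem fitting the Brezzi framework, with the pair $({\bf u},{\bf z})$ playing the role of primal unknown in the product Hilbert space $\mathcal{X}:=U\times Z$ and ${\bf p}\in U$ the Lagrange multiplier. Introducing the bounded bilinear forms
\begin{align*}
\mathcal{A}\big(({\bf u},{\bf z}),({\bf v},{\bf y})\big) &:= d({\bf u},{\bf v})+e({\bf z},{\bf y}),\\
\mathcal{B}\big(({\bf u},{\bf z}),{\bf q}\big) &:= a({\bf u},{\bf q})+b({\bf z},{\bf q}),
\end{align*}
the system \eqref{eq:saddle} rewrites as the canonical mixed problem: find $(({\bf u},{\bf z}),{\bf p})\in\mathcal{X}\times U$ with $\mathcal{A}(({\bf u},{\bf z}),\cdot)+\mathcal{B}(\cdot,{\bf p})=g^\star(\cdot)$ on $\mathcal{X}$ and $\mathcal{B}(({\bf u},{\bf z}),\cdot)=f^\star(\cdot)$ on $U$.

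The two conditions of the Brezzi theorem that I would then check are (i) an inf-sup bound for $\mathcal{B}$ and (ii) coercivity of $\mathcal{A}$ on $\ker\mathcal{B}$. Condition~(i) is immediate from coercivity of $a$: for any ${\bf q}\in U$, the test pair $({\bf q},0)\in\mathcal{X}$, of norm $\|{\bf q}\|_U$, satisfies $\mathcal{B}(({\bf q},0),{\bf q})=a({\bf q},{\bf q})\ge\alpha_a\|{\bf q}\|_U^2$, yielding an inf-sup constant at least $\alpha_a$.

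Condition~(ii) is where the work lies. Since $d$ is only positive semi-definite, the ${\bf u}$-component of $\mathcal{A}(({\bf u},{\bf z}),({\bf u},{\bf z}))=d({\bf u},{\bf u})+e({\bf z},{\bf z})$ cannot be bounded below directly and has to be controlled through the kernel constraint. For $({\bf u},{\bf z})\in\ker\mathcal{B}$, testing with ${\bf q}={\bf u}$ gives $a({\bf u},{\bf u})=-b({\bf z},{\bf u})$, whence coercivity of $a$ and boundedness of $b$ yield $\|{\bf u}\|_U\le\tfrac{\|b\|}{\alpha_a}\|{\bf z}\|_Z$. Combined with $d\ge 0$ and coercivity of $e$, this produces
\begin{align*}
\mathcal{A}\big(({\bf u},{\bf z}),({\bf u},{\bf z})\big)\ge\alpha_e\|{\bf z}\|_Z^2\gtrsim\|{\bf u}\|_U^2+\|{\bf z}\|_Z^2,
\end{align*}
with the hidden constant depending only on $\alpha_a,\alpha_e,\|b\|$. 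Existence, uniqueness, and the stability estimate with $\const{stab}$ depending only on the continuity constants of $a,b,d,e$ and on $\alpha_a,\alpha_e$ then follow from the classical Brezzi theorem for mixed problems.

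For the final statement, under symmetry of $d$ and $e$ I would introduce the Lagrangian
\begin{align*}
L\big(({\bf u},{\bf z}),{\bf p}\big):=\tfrac12 d({\bf u},{\bf u})+\tfrac12 e({\bf z},{\bf z})-g^\star\big(({\bf u},{\bf z})\big)+\mathcal{B}\big(({\bf u},{\bf z}),{\bf p}\big)-f^\star({\bf p}).
\end{align*}
Its stationarity equations coincide with \eqref{eq:saddle}, and the kernel coercivity established above implies that the primal objective is strictly convex on the affine constraint manifold, so the pair $({\bf u},{\bf z})$ extracted from the unique saddle point is the unique minimizer of \eqref{eq:optimal control abstract}. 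The main obstacle really is condition~(ii): one has to resist asking for coercivity of $d$ in the ${\bf u}$-direction and instead exploit the constraint together with coercivity of $a$ to transfer control of ${\bf u}$ from ${\bf z}$.
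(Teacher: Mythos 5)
Your proposal is correct and follows essentially the same route as the paper: both verify the Brezzi conditions, obtaining the inf-sup bound for $\mathcal{B}$ by testing with $({\bf p},0)$ and coercivity of $a$, and obtaining kernel coercivity by using the constraint with ${\bf q}={\bf u}$ to derive $\|{\bf u}\|_U\lesssim\|{\bf z}\|_Z$ and then invoking $d\ge 0$ and coercivity of $e$. The only cosmetic difference is in the final step, where you phrase the equivalence with the minimization problem via the Lagrangian while the paper argues directly that the minimizer satisfies the constraint equation and the kernel-restricted first equation, which the LBB condition shows determine $({\bf u},{\bf z})$ uniquely.
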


\begin{proof}
For the first statement, we only have to show that the bilinear form $\big((\bf{u},{\bf z}),({\bf v},{\bf y})\big) \mapsto d({\bf u},{\bf v}) + e({\bf z},{\bf y})$ is coercive on the kernel of the operator $B:U\times Z\to U'$ defined by $(B({\bf u},{\bf z}))({\bf q}) := a({\bf u},{\bf q}) + b({\bf z},{\bf q})$, and that there holds the LBB (Ladyshenskaja--Babu\v{s}ka--Brezzi) condition 
\begin{align*}
 \sup_{({\bf v},{\bf y})\in U\times Z} \frac{a({\bf v},{\bf p}) + b({\bf y},{\bf p})}{\norm{{\bf v}}{U} + \norm{{\bf y}}{Z}} \gtrsim \norm{{\bf p}}{U} \quad\text{for all }{\bf p}\in U.
\end{align*}
Choosing ${\bf v} = {\bf p}$ as well as ${\bf y} = 0$, coercivity of $a$ gives the latter inequality. 
To see coercivity on the kernel, let $({\bf u},{\bf z})\in U\times Z$ with $a({\bf u},{\bf q}) + b({\bf z},{\bf q})=0$ for all ${\bf q}\in U$. 
Then, coercivity of $a$ and continuity of $b$ yield that 
\begin{align*}
 \norm{{\bf u}}{U}^2 \lesssim a({\bf u},{\bf u}) = |-b({\bf z},{\bf u})| \lesssim \norm{{\bf z}}{Z} \norm{{\bf u}}{U}, 
\end{align*}
or $\norm{{\bf u}}{U} \lesssim \norm{{\bf z}}{Z}$.
With positive semi-definiteness of $d$ and the coercivity of $e$, we conclude that 
\begin{align*}
 d({\bf u},{\bf u}) + e({\bf z},{\bf z}) \ge e({\bf z},{\bf z}) \gtrsim \norm{{\bf z}}{Z}^2 \gtrsim \norm{{\bf u}}{U}^2 + \norm{{\bf z}}{Z}^2
\end{align*}
and thus the proof of the first statement.

One easily verifies that \eqref{eq:optimal control abstract} has a unique solution $({\bf u},{\bf z})\in U\times Z$ that, assuming symmetric $d$ and $e$, solves
$a({\bf u},{\bf q}) + b ({\bf z},{\bf q})  =  f^\star({\bf q})$ for all ${\bf q}\in U$, and 
$d({\bf u},{\bf v}) + e({\bf z},{\bf y})  =  g^\star\big(({\bf v},{\bf y})\big)$ for all $({\bf v},{\bf y})\in U\times Z$ for which $a({\bf v},{\bf q}) + b({\bf y},{\bf q})=0$ for all ${\bf q} \in U$.
It is known that thanks to the LBB condition, these both equations uniquely determine the first two components of the solution $({\bf u},{\bf z},{\bf p})$ of \eqref{eq:saddle},
which proves the second statement.
%
\end{proof}

Clearly, Lemma~\ref{lem:optimal control} is also applicable for arbitrary closed subspaces $U^\delta \subset U$ and $Z^\delta \subset Z$ with the same uniform constant $\const{stab}>0$.
In particular, there exists a unique corresponding Galerkin solution $({\bf u}^\delta,{\bf z}^\delta,{\bf p}^\delta)\in U^\delta\times Z^\delta \times U^\delta$ of \eqref{eq:saddle}, which is even \emph{quasi-optimal}, i.e., 
\begin{align}\label{eq:quasi-optimal control}
\begin{split}
 &\norm{{\bf u}-{\bf u}^\delta}{U} + \norm{{\bf z} - {\bf z}^\delta}{Z} + \norm{{\bf p} - {\bf p}^\delta}{U} 
 \\
 &\qquad\le \const{opt} \inf_{({\bf v},{\bf y},{\bf q})\in U^\delta\times Z^\delta\times U^\delta} \big(\norm{{\bf u} - {\bf v}}{U} + \norm{{\bf z} - {\bf y}}{Z} + \norm{{\bf p} - {\bf q}}{U}\big),
\end{split}
\end{align} 
where $\const{opt}>0$ is proportional to the product of $\const{stab}$ and the maximum $\const{cont}$ of 
the continuity constants of $a$, $b$, $d$, and $e$ (see, e.g., \cite[Rem.~3.2]{sw21a}).
Fixing the parabolic PDE \eqref{eq:parabolic pde}, the spaces $Z$ and $W$, and the mapping $F$, from $e({\bf z},{\bf y}) = \varrho \,\dual{{\bf z}}{{\bf y}}_Z$ one infers that $\const{cont} \eqsim \max(\varrho,1)$, and 
$\const{stab} \eqsim \min(\frac{1}{\varrho},1)$ (see, e.g., \cite[Thm.~49.13]{eg21_2}), and thus
\begin{align*}
\const{opt} \eqsim \max(\tfrac{1}{\varrho},\varrho).
\end{align*}

\subsection{Optimality system of PDEs} \label{sec:optimality system}
Let $\widehat{L}:=\clos_L Z$, so that $Z\hookrightarrow \widehat{L} \simeq \widehat{L}' \hookrightarrow Z'$ is a Gelfand triple, let $\Pi \in \cL(L,L)$ be the orthogonal projector onto $\widehat{L}$, $\bm{\ell}:=G{\bf p}$,
and let $C \in \LL(Z,Z')$ be such that $\langle \cdot,\cdot\rangle_Z=\langle C\cdot,\cdot\rangle_{\widehat{L}}$.

Then the first equation in \eqref{eq:saddle} reads as 
$$
\varrho \langle C {\bf z},{\bf y}\rangle_{\widehat{L}}-\langle \Pi \bm{\ell},{\bf y}\rangle_{\widehat{L}}+
\langle G {\bf v},\bm{\ell}\rangle_L=\langle F {\bf v},w^\star-F {\bf u}\rangle_W \text{ for all } ({\bf v},{\bf y}) \in U \times Z.
$$
i.e., ${\bf z}=\frac{1}{\varrho} C^{-1} \Pi \bm{\ell}$ and $\bm{\ell}=G^{-*}(F^*(w^\star-F {\bf u}))$ with $G^*$ and $F^*$ the Hilbert adjoints of $G\in \cL(U,L)$ and $F\in \cL(U,W)$, respectively.
Together with ${\bf u}=G^{-1}({\bf f}^\star +\frac{1}{\varrho} C^{-1} \Pi \bm{\ell})$, this last equation forms the coupled optimality system associated to our optimal control problem.

To derive a first-order PDE for $\bm{\ell}$, we consider the case that
\begin{equation} \label{eq:F}
F{\bf v}=(\chi_1 v_1,\chi_2 {\bf v}_2,\chi_3v_1(T,\cdot)),\,W=L_2(Q)\times L_2(Q)^d \times L_2(\Omega),\,w^\star=(w_1^\star,{\bf w}_2^\star,w_3^\star),
\end{equation}
for some $\chi_1,\chi_2 \in L_\infty(Q)$, $\chi_3 \in L_\infty(\Omega)$, possibly with one or more $\chi_i$ being zero.
We further make the additional regularity assumption $\divv_{\x} {\bf b}\in L_\infty(Q)$.
With the outer normal vector ${\bf n}_\x$ on $\partial\Omega$, (formal)
 integration-by-parts shows that
 \begin{equation} \label{eq:int-by-parts}
 \begin{split}
 \langle G {\bf v},\bm{\ell}\rangle_L= &
  \dual{-\partial_t \ell_{1}+\divv_{\bf x} {\bf A} \bm{\ell}_2- {\bf b}\cdot \nabla_{\bf x} \ell_1+(c- \divv_{\bf x}{\bf b})\ell_1}{v_1}_{L_2(Q)} \\
  &- \dual{\nabla_\x \ell_{1}+\bm{\ell}_{2}}{{\bf v}_2}_{L_2(Q)^d}  + \dual{\ell_{1}(t,\cdot)}{v_1(t,\cdot)}_{L_2(\Omega)}\big|_{t=0}^{t=T} \\
& + \int_0^T\dual{\ell_{1}(t,\cdot) {\bf v}_2(t,\cdot)}{{\bf n}_\x}_{L_2(\partial\Omega)}\d t+\dual{\ell_{3}}{v_1(0,\cdot)}_{L_2(\Omega)}.
\end{split}
\end{equation}
From $\langle G {\bf v},\bm{\ell}\rangle_L=\langle F {\bf v},w^\star-F {\bf u}\rangle_W$ for all ${\bf v} \in U$, we infer that
$\ell_3 =\ell_{1}(0,\cdot)$, and
\begin{equation}\label{eq:adjoint pde}
\begin{array}{rcll}
-\partial_t \ell_{1}+\divv_{\bf x} {\bf A} \bm{\ell}_2- {\bf b}\cdot \nabla_{\bf x} \ell_1+(c- \divv_{\bf x}{\bf b})\ell_1 & = & \chi_1(w_1^\star - \chi_1 u_1) & \text{ in } Q,\\
-\nabla_\x \ell_{1}-\bm{\ell}_{2}& = & \chi_2({\bf w}_2^\star - \chi_2 {\bf u}_2) & \text{ in } Q,\\
 \ell_1 & = & 0 & \text{ on }\Sigma,\\
 \ell_1(T,\cdot) & = & \chi_3(w_3^\star - \chi_3 u_1(T,\cdot)) & \text{ on }\Omega,
\end{array}
\end{equation}
i.e., $\ell_1$ is the solution of a backward parabolic problem.

\subsection{A posteriori error estimation} \label{sec:a posteriori}
The well-posedness of \eqref{eq:saddle} shows that for $({\bf u},{\bf z},{\bf p})$ being its solution, and  any $({\bf u}^\delta,{\bf z}^\delta,{\bf p}^\delta) \in U \times Z \times U$,
\begin{align} \nonumber 
&\|{\bf u}-{\bf u}^\delta\|^2_U+\|{\bf z}-{\bf z}^\delta\|^2_Z+\|{\bf p}-{\bf p}^\delta\|^2_U \eqsim\\  \nonumber
&\hspace*{-5em}\sup_{\mbox{}\hspace*{5em}0 \neq ({\bf v},{\bf y},{\bf q}) \in U \times Z \times U}\hspace*{-4em} \frac{\big[d({\bf u}\!-\!{\bf u}^\delta,{\bf v})\!+\!e({\bf z}\!-\!{\bf z}^\delta,{\bf y})\!+\!a({\bf v},{\bf p}\!-\!{\bf p}^\delta)\!+\!b({\bf y},{\bf p}\!-\!{\bf p}^\delta)\!+\!a({\bf u}\!-\!{\bf u}^\delta,{\bf q})\!+\!b({\bf z}\!-\!{\bf z}^\delta,{\bf q})\big]^2}{\|{\bf v}\|^2_U\!+\!\|{\bf y}\|^2_Z\!+\!\|{\bf q}\|^2_U}=\\  \nonumber
&\hspace*{-5em}\sup_{\mbox{}\hspace*{5em}0 \neq ({\bf v},{\bf y},{\bf q}) \in U \times Z \times U}\hspace*{-4em}  \frac{\big[g^\star(({\bf v},{\bf y}))\!+\!f({\bf q})\!-\!\Big(d({\bf u}^\delta,{\bf v})\!+\!e({\bf z}^\delta,{\bf y})\!+\!a({\bf v},{\bf p}^\delta)\!+\!b({\bf y},{\bf p}^\delta)\!+\!a({\bf u}^\delta,{\bf q})\!+\!b({\bf z}^\delta,{\bf q})\Big)\big]^2}{\|{\bf v}\|^2_U\!+\!\|{\bf y}\|^2_Z\!+\!\|{\bf q}\|^2_U}
=\\ \nonumber
&\hspace*{-5em}\sup_{\mbox{}\hspace*{5em}0 \neq ({\bf v},{\bf y},{\bf q}) \in U \times Z \times U}\hspace*{-4em}
\frac{\big[\langle F {\bf v}, w^\star\!-\!F {\bf u}^\delta \rangle_W
\!-\!\langle G {\bf v},G {\bf p}^\delta \rangle_L\!+\!
\langle {\bf f}^\star\!+\!{\bf z}^\delta\!-\!G {\bf u}^\delta,G{\bf q}\rangle_L
\!+\!
\langle \Pi G {\bf p}^\delta\!-\!\varrho C {\bf z}^\delta,{\bf y}\rangle_{\widehat{L}}\big]^2}{\|{\bf v}\|^2_U\!+\!\|{\bf y}\|^2_Z\!+\!\|{\bf q}\|^2_U}=\\ \nonumber
& \sup_{0 \neq {\bf v} \in U}
\frac{\big[\langle F {\bf v}, w^\star\!-\!F {\bf u}^\delta \rangle_W
\!-\!\langle G {\bf v},G {\bf p}^\delta \rangle_L\big]^2}{\|{\bf v}\|^2_U}
\!+\!
\sup_{0 \neq {\bf q} \in U}\frac{\langle {\bf f}^\star\!+\!{\bf z}^\delta\!-\!G {\bf u}^\delta,G{\bf q}\rangle_L^2}{\|{\bf q}\|_U^2}
\!+\!
\|\Pi  G {\bf p}^\delta\!-\!\varrho C {\bf z}^\delta\|^2_{Z'} \eqsim\\ \label{eq:3terms}
&
 \sup_{0 \neq {\bf v} \in U}
\frac{\big[\langle F {\bf v}, w^\star\!-\!F {\bf u}^\delta \rangle_W
\!-\!\langle G {\bf v},G {\bf p}^\delta \rangle_L\big]^2}{\|{\bf v}\|^2_U}\!+\!
\| {\bf f}^\star\!+\!{\bf z}^\delta\!-\!G {\bf u}^\delta\|_L^2
\!+\!
\|\Pi  G {\bf p}^\delta\!-\!\varrho C {\bf z}^\delta\|^2_{Z'}
\end{align}
from $G\colon U \rightarrow L$ being a linear isomorphism. The hidden constants absorbed by the two $\eqsim$-symbols can be quantified in terms of the well-posedness of \eqref{eq:saddle} and that of $G$.

The second term in \eqref{eq:3terms} is computable, and in any case when the topology of $Z$ equals that of $\widehat{L}$ 
and the application of $\Pi$ is computable, so is the third. 
To estimate the first term, we briefly discuss two possibilities.

\begin{remark}
For the case that  $Z=L_2(Q)\times\{0\}\times\{0\}$ and $(\chi_1,\chi_2,\chi_3)=(\ind,0,0)$, a functional estimator was introduced in \cite{ls21}.
For arbitrary approximations of the state $u\in X$ and the co-state $p\in X$, this estimator is a computable guaranteed upper bound for the error in the $X$-norm.
\end{remark}


\subsubsection{A reliable estimator}\label{sec:reliable} We consider the case that $F$ is of the form given in \eqref{eq:F} and abbreviate $\bm{\ell}^\delta=(\ell_1^\delta,\bm{\ell}_2^\delta,\ell_3^\delta):=G {\bf p}^\delta$.
In view of \eqref{eq:int-by-parts}--\eqref{eq:adjoint pde}, any (sufficiently smooth) $\widetilde{\bm{\ell}}^\delta=(\widetilde{\ell}_1^\delta,\widetilde{\bm{\ell}}_2^\delta,\widetilde{\ell}_3^\delta)$ with
$(\widetilde{\ell}_1^\delta)|_{\Sigma}=0$ and $\widetilde{\ell}_3^\delta=\widetilde{\ell}_1^\delta(0,\cdot)$ provides the upper bound
\begin{align*}
\sup_{0 \neq {\bf v} \in U}
&\frac{\langle F {\bf v}, w^\star-F {\bf u}^\delta \rangle_W
-\langle G {\bf v},G {\bf p}^\delta \rangle_L}{\|{\bf v}\|_U} \leq \|G\|_{\cL(U,L)} \|\widetilde{\bm{\ell}}^\delta-\bm{\ell}^\delta\|_L+
\\
&\|-\partial_t \widetilde{\ell}_1^\delta+\divv_{\bf x} {\bf A} \widetilde{\bm{\ell}}_2^\delta- {\bf b}\cdot \nabla_{\bf x} \widetilde{\ell}_1^\delta+(c- \divv_{\bf x}{\bf b})\widetilde{\ell}_1^\delta - \chi_1(w_1^\star - \chi_1 u_1^{\delta}) \|_{L_2(Q)}+\\
&
\|-\nabla_\x \widetilde{\ell}_1^\delta-\widetilde{\bm{\ell}}_2^\delta- \chi_2({\bf w}_2^\star - \chi_2 {\bf u}_2^{\delta})\|_{L_2(Q)^d}+\\
&\|{\bf v}\mapsto v_1(T,\cdot)\|_{\cL(U,L_2(\Omega))}
\| \widetilde{\ell}_1^\delta(T,\cdot) - \chi_3(w_3^\star - \chi_3 u_1^{\delta}(T,\cdot)) \|_{L_2(\Omega)}.
\end{align*}
Choosing $(\widetilde\ell_1^\delta,\widetilde{\bm\ell}_2^\delta)\in U$ as the solution of \eqref{eq:adjoint pde} with ${\bf u}$ replaced by the computable approximation ${\bf u}^\delta$, only the term $\|G\|_{\cL(U,L)} \|\widetilde{\bm{\ell}}^\delta-\bm{\ell}^\delta\|_L$ would be present in the upper bound, as then $\langle F {\bf v}, w^\star-F {\bf u}^\delta \rangle_W=\langle G{\bf v},\widetilde{\bm\ell}^\delta\rangle_L$.
In this case, the term $\|\widetilde{\bm{\ell}}^\delta-\bm{\ell}^\delta\|_L$ would even be a reliable and efficient estimator for the first term in~\eqref{eq:3terms}.
However, in general the mentioned solution is not computable so that one has to approximate it, e.g., by a Galerkin method. 
Inspired by \cite{ls21}, a computationally cheaper option would be to simply post-process $\bm{\ell}^\delta$ to obtain $\widetilde{\bm{\ell}}^\delta$, i.e., apply a suitable smoothening quasi-interpolator to the in general non-smooth $\bm{\ell}^\delta$.

\subsubsection{An efficient estimator}\label{sec:efficient}
 A computable efficient estimator is obtained by replacing the supremum over ${\bf v} \in U$ in the first term in \eqref{eq:3terms}
by a supremum over a finite-dimensional subspace $U^{\widetilde \delta} \subset U$.
Since for $({\bf u}^\delta,{\bf z}^\delta,{\bf p}^\delta)$ being the Galerkin solution of \eqref{eq:saddle} from $U^\delta \times Z^\delta \times U^\delta$, the numerator in the first term vanishes for ${\bf v} \in U^\delta$, $U^{\widetilde \delta}$ needs to be a `sufficient' enlargement of 
$U^\delta$.

The resulting estimator can be proven to be even equivalent to the error whenever there exists a projector $P^{\delta} \in \cL(U,U)$, bounded uniformly in $\delta$, with
$\ran P^{\delta} \subset U^{\widetilde \delta}$ and  $
\langle F ({\rm Id} - P^{\delta}){\bf v}, w^\star-F {\bf v}^\delta \rangle_W
-\langle G ({\rm Id} - P^{\delta}){\bf v},G {\bf q}^\delta \rangle_L=0$ for all ${\bf v} \in U$ and ${\bf v}^\delta,{\bf q}^\delta  \in U^\delta$.
For $U^\delta$ being a finite element space with respect to~a general partition of $Q$, i.e., not being a product of partitions of $I$ and $\Omega$, the construction of such `Fortin' projectors however seems hard.

\subsection{Numerical experiments}\label{sec:numerics optimal control}
We consider the heat equation, i.e., ${\bf A} := {\bf Id}$, ${\bf b} := 0$, and $c:=0$ in \eqref{eq:parabolic pde}.
Moreover, we set ${\bf f}^\star:=(f_1^\star,0,u_0^\star)$, $W:=L_2(Q)$, $F\colon{\bf v} \mapsto v_1$,
$Z:=L_2(Q) \times \{0\} \times \{0\} \simeq L_2(Q)$.
In particular, with $u=u_1$ the optimal control problem \eqref{eq:optimal control2} in strong form reads as 
\begin{align}\label{eq:optimal control example}
 \operatorname*{argmin}\limits_{\{(u,z)\in X\times L_2(Q):\,\partial_t u -\Delta_\x u = f_1^\star + z \wedge u(0,\cdot) = u^\star_0\}} 
 \frac12\norm{u - w^\star}{L_2(Q)}^2 + \frac{\varrho}{2} \norm{z}{L_2(Q)}^2. 
\end{align}

In this case, the optimality system derived in Section~\ref{sec:optimality system} reads as
$$
\begin{array}{rcll}
 \partial_t u_1 - \Delta_\x  u_1 & = & f_1^\star+\frac{1}{\varrho} \ell_1& \text{ on } Q,\\
 u_1 & = & 0 & \text{ on }\Sigma,\\
u_1(0,\cdot) & = & u^\star_0 & \text{ on }\Omega,
\end{array}
$$
and
$$
\begin{array}{rcll}
- \partial_t \ell_1 - \Delta_\x  \ell_1 & = & w^\star-u_1& \text{ on } Q,\\
 \ell_1 & = & 0 & \text{ on }\Sigma,\\
\ell_1(T,\cdot) & = & 0 & \text{ on }\Omega,
\end{array}
$$
where ${\bf u}_2=-\nabla_{\bf x} u_1$, $\bm{\ell}_2=-\nabla_{\bf x} \ell_1$, and $\ell_3=\ell_1(0,\cdot)$.
By prescribing arbitrary $u_1 \in X$, $\ell_1 \in L_2(Q)$ with $\partial_t u_1 - \Delta_\x  u_1 \in L_2(Q)$,
$- \partial_t \ell_1 - \Delta_\x  \ell_1  \in L_2(Q)$, $\ell_1(0,\cdot) \in L_2(\Omega)$,
$\ell_1=0$  on $\Sigma$, and $\ell_1(T,\cdot)=0$, the parameters
$f_1^\star$, $u^\star_0$, and $w^\star$ are determined by the optimality system.

The control $z$ and co-state ${\bf p}$ are determined by $z=\frac{1}{\varrho} \ell_1$ and $G {\bf p}=\bm{\ell}$, i.e.,
$$
\begin{array}{rcll}
\partial_t p_1 - \Delta_\x p_1 & = & \ell_1- \Delta_\x \ell_1& \text{ on } Q,\\
 p_1 & = & 0 & \text{ on }\Sigma,\\
p_1(0,\cdot) & = & \ell_1(0,\cdot) & \text{ on }\Omega,
\end{array}
$$
and ${\bf p}_2=\nabla_\x (\ell_1-p_1)$.

Given some conforming quasi-uniform partition $\TT^\delta$ of the space-time cylinder $Q$ into $(d+1)$-simplices, we take 
$U^\delta$ to be the $(d+1)$-fold Cartesian product of continuous piecewise affine functions 
with the first-coordinate space restricted by homogeneous Dirichlet boundary conditions on $\Sigma$, and $Z^\delta$ being the space of piecewise constants.

Taking sufficiently smooth $u_1$ and $\ell_1$, in view of \eqref{eq:quasi-optimal control} for the Galerkin solution $({\bf u}^\delta,z^\delta,{\bf p}^\delta) \in U^\delta \times Z^\delta \times U^\delta$, we obtain $\norm{{\bf u}-{\bf u}^\delta}{U} + \norm{z - z^\delta}{L_2(Q)} + \norm{{\bf p} - {\bf p}^\delta}{U} = \OO({\rm dofs}^{-\frac{1}{d+1}})$ assuming that
 $\inf_{{\bf q} \in U^\delta}\|{\bf p}-{\bf q}\|_U = \OO({\rm dofs}^{-\frac{1}{d+1}})$.

Concerning the latter, assuming the compatibility conditions $\ell_1(0,\cdot) \in H^1_0(\Omega)$,
$(\ell_1- \Delta_\x \ell_1)(0,\cdot)+\Delta_\x \ell_1(0,\cdot) \in H^1_0(\Omega)$,
and
$\partial_t(\ell_1- \Delta_\x \ell_1)(0,\cdot)+\Delta_\x(\ell_1- \Delta_\x \ell_1)(0,\cdot)+\Delta_\x^2\ell_1(0,\cdot) \in L_2(\Omega)$, \cite[Theorem~27.2]{wloka87} shows that
$p_1 \in H^2(I;H^1_0(\Omega))$.
For a smooth or convex $\Omega$, by interchanging $\partial_t$ and $\Delta_\x$, from $\partial_t(\ell_1- \Delta_\x \ell_1-\partial_t p_1) \in L_2(I;L_2(\Omega))$
regularity of the Poisson problem shows that $\partial_t p_1 \in L_2(I;H^2(\Omega))$, i.e., $p_1 \in H^1(I;H^2(\Omega))$. 
For a smooth $\Omega$,  from $\ell_1- \Delta_\x \ell_1-\partial_t p_1 \in L(I;H^1(\Omega))$, regularity of the Poisson problem shows that $p_1 \in L_2(I;H^3(\Omega))$.
For $\Omega$ being a square, the latter should be read as $p_1 \in L_2(I;H^{3-\varepsilon}(\Omega))$ for any $\varepsilon>0$ (\cite{kondratiev70}, cf.~also \cite[Ex.~9.1.25]{hackbusch92}).
Pretending that $\varepsilon=0$, we conclude that $p_1 \in H^2(Q)$ and ${\bf p}_2 \in H^2(Q)^d$, so that
$\inf_{{\bf q} \in U^\delta}\|{\bf p}-{\bf q}\|_U \lesssim \inf_{{\bf q} \in U^\delta}\|{\bf p}-{\bf q}\|_{H^1(Q) \times H^1(Q)^d}= \OO({\rm dofs}^{-\frac{1}{d+1}})$,
which is thus only `nearly' demonstrated for $\Omega$ being a square.

\subsubsection{Experiment in 1+1D}\label{sec:control 1d}
Let $\Omega:=(0,1)$, $T:=1$, and $\varrho=0.01$. 
We prescribe 
\begin{align*}
u(t,x) =u_1(t,x):=\cos(\pi t) \sin(\pi x), \quad
 \ell_1(t,x):=\varrho (1-t) \sin(\pi x)
\end{align*} 
on the space-time cylinder $Q=(0,1)^2$, and determine $f_1^\star$, $u_0^{\star}$, and $w^\star$ by the optimality system. 

Starting on an initial triangulation $\TT^\delta$ of $Q$ with two elements, we define a sequence of uniform triangulations $\TT^\delta$ by splitting  all elements in the previous $\TT^\delta$ into four new elements by repeated newest vertex bisection.
The convergence plot for the resulting Galerkin approximations $({\bf u}^\delta, z^\delta,{\bf p}^\delta)\in U^\delta\times Z^\delta\times U^\delta$ of \eqref{eq:saddle} is displayed in Figure~\ref{fig:control 1d}. 
While, as expected, $\norm{\nabla_\x (u-u^\delta)}{L_2(Q)}$ and $\norm{z-z^\delta}{L_2(Q)}$ converge at rate $\OO({\rm dofs}^{-\frac12})$, 
$\norm{u(0,\cdot) -u^\delta(0,\cdot)}{L_2(\Omega)}$, $\norm{u(T,\cdot) -u^\delta(T,\cdot)}{L_2(\Omega)}$, $\norm{u -u^\delta}{L_2(Q)}$, and $|J({\bf u},z) - J({\bf u}^\delta,z^\delta)|$ even converge with the double rate. 

\begin{center}
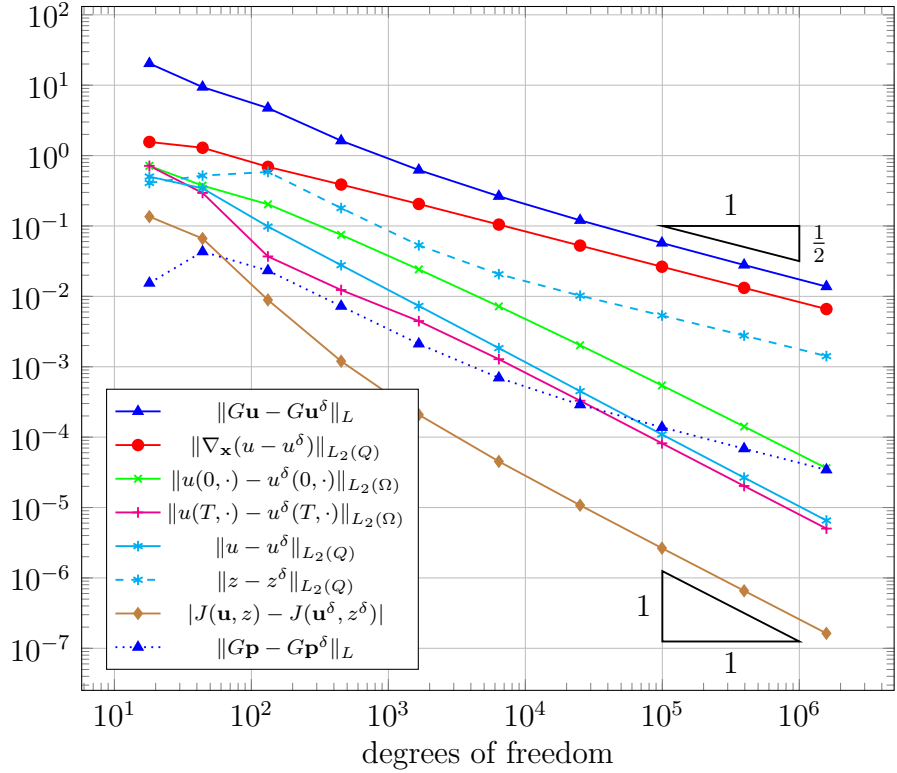
\begin{figure}
\begin{tikzpicture}
\begin{loglogaxis}[
width = 0.75\textwidth,
clip = true,
xlabel= {degrees of freedom},
grid = major,
legend pos = south west,
legend style={font=\tiny},
legend entries = {$\norm{G{\bf u} - G{\bf u}^\delta}{L}$\\ $\norm{\nabla_\x (u-u^\delta)}{L_2(Q)}$\\ $\norm{u(0,\cdot) -u^\delta(0,\cdot)}{L_2(\Omega)}$ \\ $\norm{u(T,\cdot) -u^\delta(T,\cdot)}{L_2(\Omega)}$ \\ $\norm{u -u^\delta}{L_2(Q)}$ \\ $\norm{z-z^\delta}{L_2(Q)}$\\ $|J({\bf u},z) - J({\bf u}^\delta,z^\delta)|$\\ $\norm{G{\bf p} - G{\bf p}^\delta}{L}$\\} 
]
\addplot[line width = 0.25mm,color=blue,mark=triangle*] table[x=ndof,y=err_U, col sep=comma] {data/stcont_D1_order1.csv};
\addplot[line width = 0.25mm,color=red,mark=*] table[x=ndof,y=err_Y, col sep=comma] {data/stcont_D1_order1.csv};
\addplot[line width = 0.25mm,color=green,mark=x] table[x=ndof,y=err_0, col sep=comma] {data/stcont_D1_order1.csv};
\addplot[line width = 0.25mm,color=magenta,mark=+] table[x=ndof,y=err_T, col sep=comma] {data/stcont_D1_order1.csv};
\addplot[line width = 0.25mm,color=cyan,mark=asterisk] table[x=ndof,y=err_l2, col sep=comma] {data/stcont_D1_order1.csv};
\addplot[dashed, line width = 0.25mm,color=cyan,mark=asterisk, mark options=solid] table[x=ndof,y=err_z1, col sep=comma] {data/stcont_D1_order1.csv};
\addplot[line width = 0.25mm,color=brown,mark=diamond*] table[x=ndof,y=err_J, col sep=comma] {data/stcont_D1_order1.csv};
\addplot[dotted,line width = 0.25mm,color=blue,mark=triangle*,mark options=solid] table[x=ndof,y=err_p, col sep=comma] {data/stcont_D1_order1.csv};

\tikzmath{\xc=100000;\yc=0.1;\slope=1/2;}
\draw[line width = 0.25mm] (axis cs:\xc,\yc) -- (axis cs:10*\xc,\yc) -- (axis cs:10*\xc,\yc * 0.1^\slope) -- cycle;
\node[above] at (axis cs:\xc*10^0.5,\yc) {$1$};
\node[right] at (axis cs:10*\xc,\yc * 0.1^0.5^\slope) {$\frac12$};

\tikzmath{\xcc=100000;\ycc=0.00000125;\slopee=1;}
\draw[line width = 0.25mm] (axis cs:\xcc,\ycc) -- (axis cs:\xcc,\ycc * 0.1^\slopee) -- (axis cs:10*\xcc,\ycc * 0.1^\slopee) -- cycle;
\node[left] at (axis cs:\xcc,\ycc * 0.1^0.5^\slopee) {$1$};
\node[below] at (axis cs:\xcc*10^0.5,\ycc * 0.1^\slopee) {$1$};

\end{loglogaxis}
\end{tikzpicture}
\caption{\label{fig:control 1d}
Optimal control problem in 1+1D of Section~\ref{sec:control 1d}.}
\end{figure}
\end{center}

\subsubsection{Experiment in 2+1D}\label{sec:control 2d} 
Let $\Omega:=(0,1)^2$, $T:=1$, and $\varrho = 0.01$. 
We prescribe 
\begin{align*}
 u(t,x_1,x_2):=\cos(\pi t) \sin(\pi x_1) \sin(\pi x_2), \quad  \ell_1(t,x_1,x_2):=\varrho (1-t) \sin(\pi x_1)\sin(\pi x_2)
\end{align*}
on the space-time cylinder $Q=(0,1)^3$, and we choose $f_1^\star$, $u_0^\star$, and $w^\star$ by the optimality system.  


Starting on an initial triangulation $\TT^\delta$ of $Q$ with $12$ elements, we define a sequence of quasi-uniform triangulations $\TT^\delta$ by  splitting all elements in the previous $\TT^\delta$ into eight new elements by repeated newest vertex bisection.
The convergence plot for the resulting Galerkin approximations $({\bf u}^\delta, z^\delta,{\bf p}^\delta)\in U^\delta\times Z^\delta\times U^\delta$ of \eqref{eq:saddle} is displayed in Figure~\ref{fig:control 2d}. 
While, as expected, $\norm{\nabla_\x (u-u^\delta)}{L_2(Q)}$ and $\norm{z-z^\delta}{L_2(Q)}$ converge at rate $\OO({\rm dofs}^{-\frac13})$, 
$\norm{u(0,\cdot) -u^\delta(0,\cdot)}{L_2(\Omega)}$, $\norm{u(T,\cdot) -u^\delta(T,\cdot)}{L_2(\Omega)}$, $\norm{u -u^\delta}{L_2(Q)}$, and $|J({\bf u},z) - J({\bf u}^\delta,z^\delta)|$ even converge (at least almost) with the double rate.

\begin{center}
\begin{figure}
\begin{tikzpicture}
\begin{loglogaxis}[
width = 0.75\textwidth,
clip = true,
xlabel= {degrees of freedom},
ymin=0.0000015,
grid = major,
legend pos = south west,
legend style={font=\tiny},
legend entries = {$\norm{G{\bf u} - G{\bf u}^\delta}{L}$\\ $\norm{\nabla_\x (u-u^\delta)}{L_2(Q)}$\\ $\norm{u(0,\cdot) -u^\delta(0,\cdot)}{L_2(\Omega)}$ \\ $\norm{u(T,\cdot) -u^\delta(T,\cdot)}{L_2(\Omega)}$ \\ $\norm{u -u^\delta}{L_2(Q)}$ \\ $\norm{z-z^\delta}{L_2(Q)}$\\ $|J({\bf u},z) - J({\bf u}^\delta,z^\delta)|$\\ $\norm{G{\bf p} - G{\bf p}^\delta}{L}$\\} 
]
\addplot[line width = 0.25mm,color=blue,mark=triangle*] table[x=ndof,y=err_U, col sep=comma] {data/stcont_D2_order1.csv};
\addplot[line width = 0.25mm,color=red,mark=*] table[x=ndof,y=err_Y, col sep=comma] {data/stcont_D2_order1.csv};
\addplot[line width = 0.25mm,color=green,mark=x] table[x=ndof,y=err_0, col sep=comma] {data/stcont_D2_order1.csv};
\addplot[line width = 0.25mm,color=magenta,mark=+] table[x=ndof,y=err_T, col sep=comma] {data/stcont_D2_order1.csv};
\addplot[line width = 0.25mm,color=cyan,mark=asterisk] table[x=ndof,y=err_l2, col sep=comma] {data/stcont_D2_order1.csv};
\addplot[dashed, line width = 0.25mm,color=cyan,mark=asterisk, mark options=solid] table[x=ndof,y=err_z1, col sep=comma] {data/stcont_D2_order1.csv};
\addplot[line width = 0.25mm,color=brown,mark=diamond*] table[x=ndof,y=err_J, col sep=comma] {data/stcont_D2_order1.csv};
\addplot[dotted,line width = 0.25mm,color=blue,mark=triangle*,mark options=solid] table[x=ndof,y=err_p, col sep=comma] {data/stcont_D2_order1.csv};

\tikzmath{\xc=100000;\yc=1;\slope=1/3;}
\draw[line width = 0.25mm] (axis cs:\xc,\yc) -- (axis cs:10*\xc,\yc) -- (axis cs:10*\xc,\yc * 0.1^\slope) -- cycle;
\node[above] at (axis cs:\xc*10^0.5,\yc) {$1$};
\node[right] at (axis cs:10*\xc,\yc * 0.1^0.5^\slope) {$\frac13$};

\tikzmath{\xcc=80000;\ycc=0.000125;\slopee=2/3;}
\draw[line width = 0.25mm] (axis cs:\xcc,\ycc) -- (axis cs:\xcc,\ycc * 0.1^\slopee) -- (axis cs:10*\xcc,\ycc * 0.1^\slopee) -- cycle;
\node[left] at (axis cs:\xcc,\ycc * 0.1^0.5^\slopee) {$\frac23$};
\node[below] at (axis cs:\xcc*10^0.5,\ycc * 0.1^\slopee) {$1$};

\end{loglogaxis}
\end{tikzpicture}
\caption{\label{fig:control 2d}
Optimal control problem in 2+1D of Section~\ref{sec:control 2d}.}
\end{figure}
\end{center}

\section{Time-dependent domains} \label{sec: time-dependent domains}
In this section, we assume that the spatial domain $\Omega$ changes throughout time. 
More precisely, let $\widehat\Omega\subset\R^d$ be a Lipschitz domain with boundary $\partial\widehat\Omega=\widehat\Gamma$, 
and $\widehat Q:=I\times\widehat\Omega$ the corresponding space-time cylinder with lateral boundary $\widehat\Sigma:=I\times\widehat\Gamma$. 
We denote the spaces $X$, $Y$, $U$, and $L$ on $\widehat{Q}$ by $\widehat X$, $\widehat{Y}$, $\widehat{U}$, and $\widehat{L}$, respectively.
We suppose that the actual space-time domain $Q\subset \R^{d+1}$ is given via a bijection of the form 
\begin{align*}
 \kappa: \overline{\widehat Q} \to \overline Q,\quad \left(\begin{array}{@{}c@{}}  t \\ \widehat\x \end{array} \right)
\mapsto
\left(\begin{array}{@{}c@{}}  t \\ \kappa'(t,\widehat x) \end{array} \right),
\end{align*}
where, for all $t\in\overline I$,  $\kappa'(t,\cdot): \widehat\Omega \to \R^d$ maps $\widehat\Omega$ bijectively onto some Lipschitz domain $\Omega_t\subset\R^d$.
We require the regularities  
$\kappa' \in W^1_\infty(\widehat{Q})^d$, 
$\essinf_{\widehat{Q}} \det {\rm D}_{\widehat\x} \kappa'>0$, $\sup_{t \in I} \|\kappa'(t,\cdot)\|_{W_\infty^3(\widehat{\Omega})^d}<\infty$, and $\sup_{t \in I} \|\partial_t \kappa'(t,\cdot)\|_{W_\infty^1(\widehat{\Omega})^d}<\infty$.
Note that 
\begin{align}\label{eq:Dkappa}
 {\rm D} \kappa = \left(\begin{array}{@{}cc@{}} 1 & 0\\ \partial_t\kappa' &{\rm D}_{\widehat\x}\kappa' \end{array} \right) 
 \quad \text{and}\quad \det {\rm D}\kappa = \det {\rm D}_{\widehat\x}\kappa'.
\end{align}

With the lateral boundary $\Sigma:=\kappa(\widehat\Sigma)$ and $\Omega:=\Omega_0$, we consider the parabolic PDE~\eqref{eq:parabolic pde} 
with ${\bf A}={\bf A}^\top\in L_\infty(Q)^{d\times d}$ uniformly positive, ${\bf b}\in L_\infty(Q)^d$, $c\in L_\infty(Q)$, 
$f_1\in L_2(Q)$, ${\bf f}_2\in L_2(Q)^d$, and $u_0\in L_2(\Omega_0)$.

\subsection{Formulation as first-order system}
As in the time-independent domain case, in first-order system formulation \eqref{eq:parabolic pde} reads as 
\begin{align}\label{eq:first-order system2}
 G{\bf u}:=  \left(\begin{array}{@{}c@{}} \divv {\bf u} +{\bf b}\cdot \nabla_\x u_1+ cu_1 \\ -{\bf u}_2 - {\bf A}\nabla_\x u_1 \\  u_1(0,\cdot)\end{array} \right) 
  =  \left(\begin{array}{@{}c@{}}  f_1 \\ {\bf f}_2\\ u_0 \end{array} \right)=:{\bf f},
\end{align}
which is again well-posed:

\begin{theorem} \label{thm:100} With $U$ and $L$ defined as in the time-independent domain case\footnote{Setting $Y:=\set{\widehat v \circ\kappa^{-1}}{\widehat v  \in \widehat{Y}}=\set{v \in L_2(Q)}{\nabla_\x v \in L_2(Q)^d\wedge v|_\Sigma = 0}$ equipped with the norm $\sqrt{\norm{v}{L_2(Q)}^2 + \norm{\nabla_{\bf x} v}{L_2(Q)^d}^2}$.}, $G$ is a linear isomorphism from $U$ to $L$.
\end{theorem}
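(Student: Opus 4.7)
The plan is to reduce Theorem~\ref{thm:100} to Theorem~\ref{thm:fosls} applied on the time-independent reference cylinder $\widehat Q$, by pulling the first-order system $G{\bf u}={\bf f}$ back along $\kappa$. Since $\kappa$ leaves the time variable fixed, the Jacobian has the block lower-triangular structure~\eqref{eq:Dkappa} with $\det{\rm D}\kappa=\det{\rm D}_{\widehat{\bf x}}\kappa'$, so $\widehat Q$ is again a product cylinder on which the first-order FOSLS formulation of the same parabolic PDE, now with transformed coefficients, is well-defined.

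First I would define the pullback. The scalar component is transformed by plain composition, $\widehat u_1:=u_1\circ\kappa$; together with $\kappa'\in W^1_\infty(\widehat Q)^d$ and $\essinf_{\widehat Q}\det{\rm D}_{\widehat{\bf x}}\kappa'>0$ this gives an isomorphism $Y\to\widehat Y$ matching the definition in the footnote, and, via $\kappa'(0,\cdot)$, an isomorphism $L_2(\Omega_0)\to L_2(\widehat\Omega)$ of the initial-value space. For the vector component I would use a Piola-type transform tailored to the space-time block structure,
\begin{equation*}
\widehat{\bf u}_2 := \det({\rm D}_{\widehat{\bf x}}\kappa')\,({\rm D}_{\widehat{\bf x}}\kappa')^{-1}\bigl({\bf u}_2\circ\kappa\;-\;(u_1\circ\kappa)\,\partial_t\kappa'\bigr),
\end{equation*}
which, combined with the classical Piola identity $(\divv_{\bf x}{\bf w})\circ\kappa=\tfrac{1}{\det{\rm D}_{\widehat{\bf x}}\kappa'}\divv_{\widehat{\bf x}}\bigl(\det{\rm D}_{\widehat{\bf x}}\kappa'\,({\rm D}_{\widehat{\bf x}}\kappa')^{-1}({\bf w}\circ\kappa)\bigr)$ and the chain-rule identity $(\nabla_{\bf x} u_1)\circ\kappa=({\rm D}_{\widehat{\bf x}}\kappa')^{-T}\nabla_{\widehat{\bf x}}\widehat u_1$, is designed so that, after multiplying the first row of $G{\bf u}={\bf f}$ by $\det{\rm D}_{\widehat{\bf x}}\kappa'$, the space-time divergence term $\divv{\bf u}$ transforms into $\partial_t\widehat u_1+\divv_{\widehat{\bf x}}\widehat{\bf u}_2$ up to lower-order contributions in $\widehat u_1$ and $\nabla_{\widehat{\bf x}}\widehat u_1$. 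Using the boundedness of $\kappa'$ and its derivatives up to order three, this extends to an isomorphism $\Psi:U\to\widehat U$, and the parallel change of variables on the right-hand side yields an isomorphism $\Phi:L\to\widehat L$.

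A direct computation then shows that $G{\bf u}={\bf f}$ on $Q$ is equivalent to $\widehat G\widehat{\bf u}=\widehat{\bf f}$ on $\widehat Q$, where $\widehat G$ has the same algebraic form as~\eqref{eq:first-order system2} but with transformed coefficients: the diffusion matrix is (up to a positive power of $\det{\rm D}_{\widehat{\bf x}}\kappa'$ dictated by the chosen normalization) the conjugation
\begin{equation*}
\widehat{\bf A}\;\propto\;({\rm D}_{\widehat{\bf x}}\kappa')^{-1}\,({\bf A}\circ\kappa)\,({\rm D}_{\widehat{\bf x}}\kappa')^{-T},
\end{equation*}
while new $\widehat{\bf b}\in L_\infty(\widehat Q)^d$ and $\widehat c\in L_\infty(\widehat Q)$ absorb all chain-rule terms generated by $\partial_t\kappa'$ and by derivatives of $\det{\rm D}_{\widehat{\bf x}}\kappa'\,({\rm D}_{\widehat{\bf x}}\kappa')^{-1}$. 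Symmetry and uniform positivity of $\widehat{\bf A}$ follow from those of ${\bf A}$ together with $\essinf_{\widehat Q}\det{\rm D}_{\widehat{\bf x}}\kappa'>0$; $L_\infty$-boundedness of $\widehat{\bf b}$ and $\widehat c$ follows from the assumed $\sup_t\|\kappa'(t,\cdot)\|_{W^3_\infty(\widehat\Omega)^d}<\infty$ and $\sup_t\|\partial_t\kappa'(t,\cdot)\|_{W^1_\infty(\widehat\Omega)^d}<\infty$. Theorem~\ref{thm:fosls} applied on $\widehat Q$ then yields that $\widehat G:\widehat U\to\widehat L$ is a linear isomorphism, and composing with $\Psi$ and $\Phi$ and their inverses concludes the proof.

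The main obstacle is identifying the correct vector transform so that the space-time divergence pulls back cleanly into the FOSLS form while all transformed coefficients on $\widehat Q$ still satisfy the hypotheses of Theorem~\ref{thm:fosls}; the bookkeeping of chain-rule terms is precisely what forces the relatively strong $W^3_\infty$ and $W^1_\infty$ regularity of $\kappa'$ and $\partial_t\kappa'$ respectively. Once these identities are in place, the reduction is essentially mechanical.
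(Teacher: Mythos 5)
Your overall strategy---pull the first-order system back to the reference cylinder $\widehat Q$ by a Piola-type transform and invoke Theorem~\ref{thm:fosls} there---is exactly the paper's, and your formula for $\widehat{\bf u}_2$ coincides with the second component of the transform $H{\bf u}=\det{\rm D}\kappa\,({\rm D}\kappa)^{-1}{\bf u}\circ\kappa$ used in the proof. The gap is in the scalar component. The Piola identity you quote applies to the \emph{full space-time} field $(u_1,{\bf u}_2)$, whose first transformed component is $\det{\rm D}_{\widehat\x}\kappa'\,(u_1\circ\kappa)$, not $u_1\circ\kappa$. With your choice $\widehat u_1:=u_1\circ\kappa$ the identity reads $\det{\rm D}_{\widehat\x}\kappa'\,(\divv{\bf u})\circ\kappa=\partial_t\bigl(\det{\rm D}_{\widehat\x}\kappa'\,\widehat u_1\bigr)+\divv_{\widehat\x}\widehat{\bf u}_2=\det{\rm D}_{\widehat\x}\kappa'\,\partial_t\widehat u_1+\widehat u_1\,\partial_t\det{\rm D}_{\widehat\x}\kappa'+\divv_{\widehat\x}\widehat{\bf u}_2$, so the coefficients of $\partial_t\widehat u_1$ and of $\divv_{\widehat\x}\widehat{\bf u}_2$ do not match. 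The residual $(\det{\rm D}_{\widehat\x}\kappa'-1)\,\partial_t\widehat u_1$ is a first-order term in \emph{time} and cannot be absorbed into the admissible lower-order terms $\widehat{\bf b}\cdot\nabla_{\widehat\x}\widehat u_1+\widehat c\,\widehat u_1$ of \eqref{eq:first-order system}, which contain only spatial derivatives; dividing the row by $\det{\rm D}_{\widehat\x}\kappa'$ merely moves the variable coefficient onto $\divv_{\widehat\x}\widehat{\bf u}_2$. Hence your central claim that the space-time divergence pulls back to $\partial_t\widehat u_1+\divv_{\widehat\x}\widehat{\bf u}_2$ up to lower-order contributions in $\widehat u_1$ and $\nabla_{\widehat\x}\widehat u_1$ is false as stated. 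The repair is the paper's choice $\widehat u_1:=\det{\rm D}_{\widehat\x}\kappa'\,(u_1\circ\kappa)$, i.e.\ the full Piola transform applied to the pair, after which one verifies via \eqref{102}--\eqref{103} that $H$ is still an isomorphism of $U$ onto $\widehat U$.

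A second omission: even with the corrected scalar transform, the \emph{flux} equation acquires a zeroth-order term $\widehat u_1\widehat{\bf w}$ (generated by the $\partial_t\kappa'$ contribution inside $\widehat{\bf u}_2$ and by $\nabla_{\widehat\x}\det{\rm D}\kappa$). This term sits in the second row of the system and therefore cannot be absorbed into $\widehat{\bf b}$ or $\widehat c$, which act only in the first row, so Theorem~\ref{thm:fosls} does not apply directly. The paper removes it by composing with the triangular isomorphism $\widehat{\bf u}\mapsto\left(\begin{smallmatrix}I&0\\-\widehat{\bf w}&I\end{smallmatrix}\right)\widehat{\bf u}$ of $\widehat U$, which shifts $\widehat u_1\widehat{\bf w}$ out of the flux row at the cost of modifying the (still admissible) lower-order coefficients in the first row; this is precisely where $\divv_{\widehat\x}\widehat{\bf w}\in L_\infty(\widehat Q)$, and hence the $W^3_\infty$ spatial regularity of $\kappa'$, is actually used. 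Both repairs are needed before your reduction to Theorem~\ref{thm:fosls} goes through.
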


\begin{proof}
We show that $G=F\circ \widetilde G\circ H$ with linear isomorphisms $F\in\cL(\widehat L,L), \widetilde G\in\cL(\widehat U,\widehat L)$, and $H\in\cL(U,\widehat U)$.

We set $H{\bf u}:=\widehat{\bf u}:=\det {\rm D}\kappa\, ({\rm D}\kappa)^{-1} {\bf u} \circ \kappa$. 
A familiar property of the Piola transformation (e.g. \cite[Lemma~9.6]{eg21_1}) is that
\be \label{101}
\divv \widehat{\bf u}=\det {\rm D}\kappa\, \divv {\bf u} \circ \kappa.
\ee
By definition of $\widehat{\bf u}$ we have
\be \label{102}
 u_1 \circ \kappa=(\det {\rm D}\kappa)^{-1} \widehat u_1, \quad {\bf u}_2 \circ \kappa=(\det {\rm D}\kappa)^{-1}[{\rm D}_{\widehat{\bf x}} \kappa' \,\widehat{\bf u}_2+\widehat{u}_1 \partial_t \kappa'].
\ee
Applications of the product and chain rules show that
\be\label{103}
\nabla_{{\bf x}} u_1 \circ \kappa=(\det {\rm D}\kappa)^{-1} ({\rm D}_{\widehat{\bf x}}\kappa')^{-\top}\big[\nabla_{\widehat{\bf x}} \widehat{u}_1-(\det {\rm D}\kappa)^{-1} \widehat{u}_1 \nabla_{\widehat{\bf x}}(\det {\rm D} \kappa)\big].
\ee
We conclude that $\norm{{\bf u}}{U} \eqsim \norm{\widehat {\bf u}}{\widehat U}$ and thus that $H$ is a linear isomorphism.

We set
$\widehat{\bf A}:={\bf A} \circ \kappa$,
$\widehat{\bf b}:={\bf b} \circ \kappa$, 
$\widehat{c}:=c \circ \kappa$,
and
\begin{align*}
\widetilde{\bf A}&:=({\rm D}_{\widehat{\bf x}}\kappa')^{-1} \widehat{\bf A} ({\rm D}_{\widehat{\bf x}}\kappa')^{-\top},\\
\widetilde{\bf b} &:=({\rm D}_{\widehat{\bf x}}\kappa')^{-1} \widehat{\bf b},\\
\widetilde{c}&:=(\det {\rm D}\kappa)^{-1}\big[\widehat{c} - (\det D\kappa)^{-1}({\rm D}_{\widehat{\bf x}}\kappa')^{-1} \widehat{\bf b} \cdot \nabla_{\widehat{\bf x}}(\det {\rm D} \kappa)\big]\\
\widehat{\bf w}&:= ({\rm D}_{\widehat{\bf x}}\kappa')^{-1}\partial_t \kappa' - (\det {\rm D}\kappa)^{-1} \widetilde{\bf A} \nabla_{\widehat{\bf x}}\det {\rm D} \kappa.
\end{align*}
Setting $\widehat{f}_1:=f_1 \circ \kappa$,
$\widehat{{\bf f}}_2:={\bf f}_2 \circ \kappa$,
$\widehat{u}_0:=u_0 \circ \kappa'(0,\cdot)$ and using \eqref{101}--\eqref{103}, we find that \eqref{eq:first-order system2} is equivalent to
\begin{align*}
\widehat{f}_1=(\divv {\bf u} +{\bf b}\cdot \nabla_\x u_1+ cu_1)\circ \kappa &= (\det {\rm D}\kappa)^{-1}
\big[\divv \widehat{\bf u}+\widetilde{\bf b} \cdot  \nabla_{\widehat{\bf x}} \widehat{u}_1+\widetilde{c} \,\widehat u_1\big],\\
\widehat{{\bf f}}_2=-({\bf u}_2 + {\bf A}\nabla_\x u_1)\circ \kappa &=
-(\det {\rm D}\kappa)^{-1}{\rm D}_{\widehat{\bf x}}\kappa'\big[\widehat{\bf u}_2+\widetilde{\bf A}\nabla_{\widehat{\bf x}} \widehat{u}_1+\widehat{u}_1 \widehat{\bf w}
\big],\\
\widehat{u}_0=(u_1 \circ \kappa)(0,\cdot)&=(\det {\rm D}_{\widehat{\bf x}} \kappa'(0,\cdot))^{-1} \widehat{u}_1(0,\cdot).
\end{align*}
Defining the linear isomorphism $F$ via 
\begin{align*}
F^{-1}{\bf f}:= \left(\begin{array}{@{}c@{}}  \det {\rm D}\kappa\,\widehat f_1 \\ \det {\rm D}\kappa\, ({\rm D}_{\widehat \x}\kappa')^{-1} \,\widehat{\bf f}_2\\ (\det {\rm D}_{\widehat{\bf x}} \kappa'(0,\cdot))\widehat u_0 \end{array} \right),
\end{align*}
it remains to show that
\begin{align*}
\widetilde G\widehat{\bf u}:=  \left(\begin{array}{@{}c@{}} \divv \widehat {\bf u} + \widetilde{\bf b}\cdot \nabla_{\widehat\x} \widehat u_1 + {\widetilde c} \,\widehat u_1 \\ - \widehat{\bf u}_2 - {\bf A}\nabla_{\widehat\x} \widehat u_1-\widehat u_1\widehat{\bf w} \\  \widehat u_1(0,\cdot)\end{array} \right)
\end{align*}
is  linear isomorphism from $\widehat U$ to $\widehat L$.

This follows from the identity 
\begin{align*}
\widetilde G \left(\begin{array}{@{}cc@{}} I & 0 \\ -\widehat{\bf w} & I \end{array} \right) \widehat {\bf u}=
\left(\begin{array}{@{}c@{}} \divv \widehat{\bf u} + (\widetilde{\bf b}-\widehat{\bf w})\cdot \nabla_{\widehat\x} \widehat u_1 + (\widetilde c-\divv_{\widehat\x} \widehat{\bf w}) \widehat u_1 \\ - \widehat{\bf u}_2 - {\bf A}\nabla_{\widehat\x} \widehat u_1 \\  \widehat u_1(0,\cdot)\end{array} \right).
\end{align*}
By the assumed regularity of $\kappa$ and $\kappa^{-1}$, \eqref{eq:Dkappa} yields that $\divv_{\widehat\x}\widehat {\bf w}\in L_\infty(\widehat Q)$, so that the latter mapping is a linear isomorphism from $\widehat U$ to $\widehat L$ according to Theorem~\ref{thm:fosls}.
Noting that $\left(\begin{array}{@{}cc@{}} I & 0 \\ -\widehat{\bf w} & I \end{array} \right)$ is a linear isomorphism from $\widehat U$ to $\widehat U$, we conclude the proof.
\end{proof}

\begin{remark}
Assume that also $\partial_t\partial_{\widehat x_i} \kappa'\in L_\infty(\widehat Q)$ for all $i\in\{1,\dots,d\}$. 
Given ${\bf f}=(f_1,{\bf f}_2,u_0)\in L$,  ${\bf u}=(u_1,{\bf u}_2)\in U$ then solves~\eqref{eq:first-order system2} if and only if $u_1 \in X:=\set{\widehat u\circ \kappa^{-1}}{\widehat u \in \widehat X}$ solves 
\begin{align*}
 \int_Q \partial_t u \,v+ ({\bf A}\nabla_\x u)\cdot\nabla_\x v + {\bf b}\cdot \nabla_\x u\,v + cuv \d \x \d t 
 &= \int_{Q} f_1 v + \divv_\x {\bf f}_2 \,v \d\x \d t \quad\text{for all }v\in Y,
\end{align*}
and  $u_1(0,\cdot) = u_0$.
Indeed, ${\bf u}=(u_1,{\bf u}_2)\in U$ together with \eqref{eq:Dkappa}, \eqref{102}, and Lemma~\ref{lem:U2X} imply that $\det {\rm D}_{\widehat\x}\kappa' \,u_1\circ\kappa  \in \widehat X$, and thus by the additional regularity assumption that $u_1\circ\kappa\in\widehat X$. 
The remainder follows from partial integration.  
In particular, also the second part of Theorem~\ref{thm:fosls} holds analogously for time-dependent domains.
Together with the open mapping theorem, this insight also allows to generalize Theorem~\ref{thm:ss09}. 
\end{remark}

 Theorem~\ref{thm:100} shows that also in the time-dependent domain case one can approximate the solution of the parabolic PDE by applying a Galerkin discretization to the variational problem $\langle G {\bf u},G {\bf v}\rangle_L=\langle {\bf f},G {\bf v}\rangle_L$ for all ${\bf v} \in U$.
Thinking of finite element discretizations, in particular for polytopal domains $Q$, this provides an attractive alternative to discretizations that require a transformation of the PDE to a time-independent domain, e.g., ALE time-stepping methods~\cite{shd01,gs17,sg20}.

\subsection{Numerical experiments}\label{sec:move numerics}
We consider the heat equation, i.e., ${\bf A} := {\bf Id}$, ${\bf b} := 0$, and $c:=0$ in \eqref{eq:parabolic pde}. 
Moreover, we set ${\bf f}_2:=0$ and ${\bf u}:=(u,-\nabla_\x u)$. 
We will approximate the latter function ${\bf u}\in U$ in the conforming  subspace of continuous piecewise affine functions $U^\delta$ on triangulations of the space-time cylinder $Q$ whose first component vanishes on the lateral boundary $\Sigma$. 
Using that $\{{\bf v}=(v_1,{\bf v}_2) \in H^1(Q)^{d+1}\colon v_1|_\Sigma=0\} \hookrightarrow U$, we know that  $\norm{{\bf u}-{\bf u}^\delta}{U}=  \OO({\rm dofs}^{-\frac{1}{d+1}})$ for sufficiently smooth $u$ and uniform mesh-refinement.


\begin{figure}[h]
\begin{tikzpicture}
\tikzmath{\c=4;}
\draw[line width = 0.25mm] (0,0) -- (0,\c) -- (\c/2,3*\c/4) -- (\c,\c) -- (\c,0) -- (\c/2,\c/4) -- cycle;
\end{tikzpicture}
\qquad\qquad
\begin{tikzpicture}
\tikzmath{\c=4;}
\draw[line width = 0.25mm] (0,0,0) -- (0,0,\c) -- (0,\c,\c) -- (0,\c,0) -- cycle;
\draw[line width = 0.25mm] (\c/2,\c/4,\c/4) -- (\c/2,\c/4,3*\c/4) -- (\c/2,3*\c/4,3*\c/4) -- (\c/2,3*\c/4,\c/4) -- cycle;
\draw[line width = 0.25mm] (\c,0,0) -- (\c,0,\c) -- (\c,\c,\c) -- (\c,\c,0) -- cycle;
\draw[line width = 0.25mm] (0,0,0) -- (\c/2,\c/4,\c/4);
\draw[line width = 0.25mm] (0,0,\c) -- (\c/2,\c/4,3*\c/4);
\draw[line width = 0.25mm] (0,\c,\c) -- (\c/2,3*\c/4,3*\c/4);
\draw[line width = 0.25mm] (0,\c,0) -- (\c/2,3*\c/4,\c/4);
\draw[line width = 0.25mm] (\c/2,\c/4,\c/4) -- (\c,0,0);
\draw[line width = 0.25mm] (\c/2,\c/4,3*\c/4) -- (\c,0,\c);
\draw[line width = 0.25mm] (\c/2,3*\c/4,3*\c/4) -- (\c,\c,\c);
\draw[line width = 0.25mm] (\c/2,3*\c/4,\c/4) -- (\c,\c,0);
\end{tikzpicture}
\caption{Space-time domains $Q$ of Section~\ref{sec:move 1d} (left) and Section~\ref{sec:move 2d} (right).}\label{fig:geometries}
\end{figure}
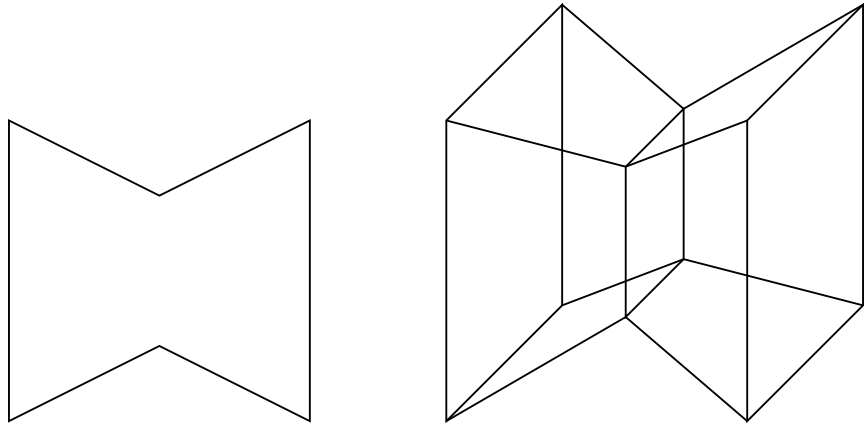

\subsubsection{Experiment in 1+1D}\label{sec:move 1d}
Let 
\begin{align*}
 Q := {\rm int}\Bigg(\underbrace{{\rm conv}\left\{\begin{pmatrix}0\\0\end{pmatrix}, \begin{pmatrix}0.5\\0.25\end{pmatrix}, \begin{pmatrix}0.5\\0.75\end{pmatrix}, \begin{pmatrix}0\\1\end{pmatrix} \right\}}_{=: Q_1} 
 \cup \underbrace{{\rm conv}\left\{\begin{pmatrix}0.5\\0.25\end{pmatrix}, \begin{pmatrix}1\\0\end{pmatrix}, \begin{pmatrix}1\\1\end{pmatrix}, \begin{pmatrix}0.5\\0.75\end{pmatrix} \right\}}_{=: Q_2}\Bigg),
\end{align*}
where ${\rm int}(\cdot)$ denotes the interior of a set and ${\rm conv}(\cdot)$ denotes the convex hull of a set of points; see Figure~\ref{fig:geometries} for an illustration.
With the bijection $\kappa\colon[0,1]^2\to \overline Q\colon (t,\widehat{x})\mapsto \big(t, (\widehat{x}-\tfrac12) (|t-\tfrac12|+\tfrac12) + \tfrac12 \big)$, we prescribe the solution $u$ by
\begin{align*}
 u(\kappa(t,\widehat{x})) := \sin(\pi \widehat x),
\end{align*}
and choose $f_1:=\partial_t u - \Delta_x u$, and $u_0:=u(0,\cdot)$. 

Starting on an initial triangulation $\TT^\delta$ of $Q$ with six elements (such that each element is either in $\overline Q_1$ or $\overline Q_2$), we define a sequence of triangulations $\TT^\delta$ by splitting  all elements in the previous $\TT^\delta$ into four new elements by repeated newest vertex bisection.
We compute the Galerkin approximation ${\bf u}^\delta\in U^\delta$ of ${\bf u}\in U$ with respect to the scalar product $\dual{G(\cdot)}{G(\cdot)}_L$, with $U^\delta$ being the $2$-fold Cartesian product of continuous piecewise affine functions on $\TT^\delta$ whose first component vanishes on $\Sigma$.
The corresponding convergence plot is displayed in Figure~\ref{fig:move 1d}. 
Here, $u^\delta$ denotes the first component of ${\bf u}^\delta$. 
While $\norm{{G\bf u}-G{\bf u^\delta}}{L}\simeq\norm{{\bf u}-{\bf u^\delta}}{U}$ and $\norm{\nabla_\x (u-u^\delta)}{L_2(Q)}$ converge as expected at rate $\OO({\rm dofs}^{-\frac12})$, 
$\norm{u(0,\cdot) -u^\delta(0,\cdot)}{L_2(\Omega)}$, $\norm{u(T,\cdot) -u^\delta(T,\cdot)}{L_2(\Omega)}$, and $\norm{u -u^\delta}{L_2(Q)}$ converge roughly with the rate $\OO({\rm dofs}^{-0.8})$.

\begin{center}
\begin{figure}
\begin{tikzpicture}
\begin{loglogaxis}[
width = 0.75\textwidth,
clip = true,
xlabel= {degrees of freedom},
grid = major,
legend pos = south west,
legend entries = {$\norm{G{\bf u} - G{\bf u}^\delta}{L}$\\$\norm{\nabla_\x (u-u^\delta)}{L_2(Q)}$\\ $\norm{u(0,\cdot) -u^\delta(0,\cdot)}{L_2(\Omega)}$ \\ $\norm{u(T,\cdot) -u^\delta(T,\cdot)}{L_2(\Omega)}$ \\ $\norm{u -u^\delta}{L_2(Q)}$ \\} 
]
\addplot[line width = 0.25mm,color=blue,mark=triangle*] table[x=ndof,y=est, col sep=comma] {data/stmove_D1_order1_theta100.csv};
\addplot[line width = 0.25mm,color=red,mark=*] table[x=ndof,y=err_Y, col sep=comma] {data/stmove_D1_order1_theta100.csv};
\addplot[line width = 0.25mm,color=green,mark=x] table[x=ndof,y=err_0, col sep=comma] {data/stmove_D1_order1_theta100.csv};
\addplot[line width = 0.25mm,color=magenta,mark=+] table[x=ndof,y=err_T, col sep=comma] {data/stmove_D1_order1_theta100.csv};
\addplot[line width = 0.25mm,color=cyan,mark=asterisk] table[x=ndof,y=err_l2, col sep=comma] {data/stmove_D1_order1_theta100.csv};

\tikzmath{\xc=100000;\yc=0.15;\slope=1/2;}
\draw[line width = 0.25mm] (axis cs:\xc,\yc) -- (axis cs:10*\xc,\yc) -- (axis cs:10*\xc,\yc * 0.1^\slope) -- cycle;
\node[above] at (axis cs:\xc*10^0.5,\yc) {$1$};
\node[right] at (axis cs:10*\xc,\yc * 0.1^0.5^\slope) {$\frac12$};

\tikzmath{\xcc=100000;\ycc=0.00023;\slopee=0.8;}
\draw[line width = 0.25mm] (axis cs:\xcc,\ycc) -- (axis cs:\xcc,\ycc * 0.1^\slopee) -- (axis cs:10*\xcc,\ycc * 0.1^\slopee) -- cycle;
\node[left] at (axis cs:\xcc,\ycc * 0.1^0.5^\slopee) {$0.8$};
\node[below] at (axis cs:\xcc*10^0.5,\ycc * 0.1^\slopee) {$1$};

\end{loglogaxis}
\end{tikzpicture}
\caption{\label{fig:move 1d} Time-dependent domain problem in 1+1D of Section~\ref{sec:move 1d}.}
\end{figure}
\end{center}

\subsubsection{Experiment in 2+1D}\label{sec:move 2d}
Let 
\begin{align*}
 Q := {\rm int} \Bigg(&\underbrace{{\rm conv}\left\{\begin{pmatrix}0\\0\\0\end{pmatrix}, \begin{pmatrix}0\\1\\0\end{pmatrix}, \begin{pmatrix}0\\1\\1\end{pmatrix}, \begin{pmatrix}0\\0\\1\end{pmatrix},
 \begin{pmatrix}0.5\\0.25\\0.25\end{pmatrix}, \begin{pmatrix}0.5\\0.75\\0.25\end{pmatrix}, \begin{pmatrix}0.5\\0.75\\0.75\end{pmatrix}, \begin{pmatrix}0.5\\0.25\\0.75\end{pmatrix}\right\}}_{=:Q_1},
 \\
 \cup &\underbrace{{\rm conv}\left\{ \begin{pmatrix}0.5\\0.25\\0.25\end{pmatrix}, \begin{pmatrix}0.5\\0.75\\0.25\end{pmatrix}, \begin{pmatrix}0.5\\0.75\\0.75\end{pmatrix}, \begin{pmatrix}0.5\\0.25\\0.75\end{pmatrix},
 \begin{pmatrix}1\\0\\0\end{pmatrix}, \begin{pmatrix}1\\1\\0\end{pmatrix}, \begin{pmatrix}1\\1\\1\end{pmatrix}, \begin{pmatrix}1\\0\\1\end{pmatrix}\right\}}_{=:Q_2}\Bigg).
\end{align*}
where ${\rm int}(\cdot)$ denotes the interior of a set and ${\rm conv}(\cdot)$ denotes the convex hull of a set of points; see Figure~\ref{fig:geometries} for an illustration.
With the bijection $\kappa\colon[0,1]^3\to \overline Q\colon (t,\widehat{\bf x})\mapsto \big(t, \big(\widehat{\bf x}-(\tfrac12,\tfrac12)\big) (|t-\tfrac12|+\tfrac12) + (\tfrac12,\tfrac12) \big)$, we prescribe the solution $u$ by
\begin{align*}
 u(\kappa(t,\widehat{x}_1,\widehat{x}_2)) := \sin(\pi \widehat{x}_1) \sin(\pi \widehat{x}_2),
\end{align*} 
and choose $f_1:=\partial_t u - \Delta_{\bf x} u$, and $u_0:=u(0,\cdot)$.

Starting on an initial triangulation $\TT^\delta$ of $Q$ with twelve elements (such that each element is either in $\overline Q_1$ or $\overline Q_2$), we define a sequence of triangulations $\TT^\delta$ by splitting  all elements in the previous $\TT^\delta$ into eight new elements by repeated newest vertex bisection.
We compute the Galerkin approximation ${\bf u}^\delta\in U^\delta$ of ${\bf u}\in U$ with respect to the scalar product $\dual{G(\cdot)}{G(\cdot)}_L$, with $U^\delta$ being the $3$-fold Cartesian product of continuous piecewise affine functions on $\TT^\delta$ whose first component vanishes on $\Sigma$.
The corresponding convergence plot is displayed in Figure~\ref{fig:move 2d}. 
Here, $u^\delta$ denotes the first component of ${\bf u}^\delta$. 
While $\norm{{G\bf u}-G{\bf u^\delta}}{L}\simeq\norm{{\bf u}-{\bf u^\delta}}{U}$ and $\norm{\nabla_\x (u-u^\delta)}{L_2(Q)}$ converge as expected at rate $\OO({\rm dofs}^{-\frac13})$, 
$\norm{u(0,\cdot) -u^\delta(0,\cdot)}{L_2(\Omega)}$, $\norm{u(T,\cdot) -u^\delta(T,\cdot)}{L_2(\Omega)}$, and $\norm{u -u^\delta}{L_2(Q)}$ converge roughly with the rate $\OO({\rm dofs}^{-0.6})$. 

\begin{center}
\begin{figure}
\begin{tikzpicture}
\begin{loglogaxis}[
width = 0.75\textwidth,
clip = true,
xlabel= {degrees of freedom},
grid = major,
legend pos = south west,
legend entries = {$\norm{G{\bf u} - G{\bf u}^\delta}{L}$\\$\norm{\nabla_\x (u-u^\delta)}{L_2(Q)}$\\ $\norm{u(0,\cdot) -u^\delta(0,\cdot)}{L_2(\Omega)}$ \\ $\norm{u(T,\cdot) -u^\delta(T,\cdot)}{L_2(\Omega)}$ \\ $\norm{u -u^\delta}{L_2(Q)}$\\} 
]
\addplot[line width = 0.25mm,color=blue,mark=triangle*] table[x=ndof,y=est, col sep=comma] {data/stmove_D2_order1_theta100.csv};
\addplot[line width = 0.25mm,color=red,mark=*] table[x=ndof,y=err_Y, col sep=comma] {data/stmove_D2_order1_theta100.csv};
\addplot[line width = 0.25mm,color=green,mark=x] table[x=ndof,y=err_0, col sep=comma] {data/stmove_D2_order1_theta100.csv};
\addplot[line width = 0.25mm,color=magenta,mark=+] table[x=ndof,y=err_T, col sep=comma] {data/stmove_D2_order1_theta100.csv};
\addplot[line width = 0.25mm,color=cyan,mark=asterisk] table[x=ndof,y=err_l2, col sep=comma] {data/stmove_D2_order1_theta100.csv};

\tikzmath{\xc=15000;\yc=2;\slope=1/3;}
\draw[line width = 0.25mm] (axis cs:\xc,\yc) -- (axis cs:10*\xc,\yc) -- (axis cs:10*\xc,\yc * 0.1^\slope) -- cycle;
\node[above] at (axis cs:\xc*10^0.5,\yc) {$1$};
\node[right] at (axis cs:10*\xc,\yc * 0.1^0.5^\slope) {$\frac13$};

\tikzmath{\xcc=15000;\ycc=0.009;\slopee=0.6;}
\draw[line width = 0.25mm] (axis cs:\xcc,\ycc) -- (axis cs:\xcc,\ycc * 0.1^\slopee) -- (axis cs:10*\xcc,\ycc * 0.1^\slopee) -- cycle;
\node[left] at (axis cs:\xcc,\ycc * 0.1^0.5^\slopee) {$0.6$};
\node[below] at (axis cs:\xcc*10^0.5,\ycc * 0.1^\slopee) {$1$};

\end{loglogaxis}
\end{tikzpicture}
\caption{\label{fig:move 2d} Time-dependent domain problem in 2+1D of Section~\ref{sec:move 2d}.}
\end{figure}
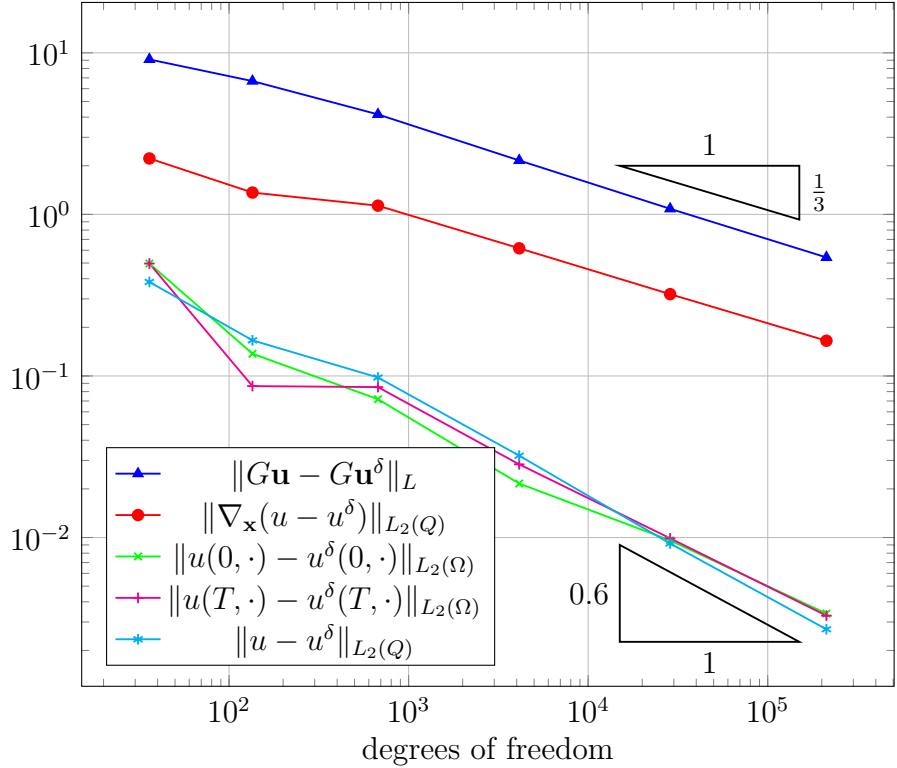
\end{center}

\section{Conclusion}\label{sec:conclusion}
In this work, we have demonstrated that the space-time FOSLS~\cite{fk21} and its generalization~\cite{gs21} to general second-order parabolic PDEs can be easily applied to solve parameter-dependent problems, optimal control problems, and time-dependent domain problems. 
In each case, completely unstructured space-time finite elements may be employed, and our numerical experiments exhibit optimal convergence. 
We also want to stress that the FOSLS is applicable to the combined problem, i.e., parameter-dependent optimal control problems on time-dependent domains.  
Indeed, the inf-sup stability of the saddle-point problem corresponding to an optimal control problem essentially only hinges on the coercivity of the bilinear form $a$ (see Lemma~\ref{lem:optimal control}), which is also valid in the case of time-dependent domains (see Theorem~\ref{thm:100}).
As this stability holds uniformly for \emph{arbitrary} trial spaces, a reduced basis method as in Section~\ref{sec:reduced basis} can be employed, where the greedy algorithm of Section~\ref{sec:greedy algorithm} can be steered by one of the estimators discussed in Section~\ref{sec:a posteriori}.

\bibliographystyle{alpha}
\bibliography{literature}

\newcommand{\etalchar}[1]{$^{#1}$}
\begin{thebibliography}{LSTY21b}

\bibitem[And13]{andreev13}
Roman Andreev.
\newblock Stability of sparse space--time finite element discretizations of
  linear parabolic evolution equations.
\newblock {\em IMA J. Numer. Anal.}, 33(1):242--260, 2013.

\bibitem[BCD{\etalchar{+}}11]{bcddpw11}
Peter Binev, Albert Cohen, Wolfgang Dahmen, Ronald DeVore, Guergana Petrova,
  and Przemyslaw Wojtaszczyk.
\newblock Convergence rates for greedy algorithms in reduced basis methods.
\newblock {\em SIAM J. Math. Anal.}, 43(3):1457--1472, 2011.

\bibitem[BG09]{bg09}
Pavel~B. Bochev and Max~D. Gunzburger.
\newblock {\em Least-squares finite element methods}, volume 166 of {\em
  Applied Mathematical Sciences}.
\newblock Springer, New York, 2009.

\bibitem[BHT22]{bht22}
Rahel Br{\"u}gger, Helmut Harbrecht, and Johannes Tausch.
\newblock Boundary integral operators for the heat equation in time-dependent
  domains.
\newblock {\em Integral Equations Operator Theory}, 94(2):1--28, 2022.

\bibitem[EG21a]{eg21_1}
Alexandre Ern and Jean-Luc Guermond.
\newblock {\em {Finite elements I: Approximation and interpolation}},
  volume~72.
\newblock Springer Nature, Cham, 2021.

\bibitem[EG21b]{eg21_2}
Alexandre Ern and Jean-Luc Guermond.
\newblock {\em {Finite Elements II: Galerkin approximation, elliptic and mixed
  PDEs}}, volume~73.
\newblock Springer Nature, 2021.

\bibitem[EKP11]{ekp11}
Jens~L. Eftang, David~J. Knezevic, and Anthony~T. Patera.
\newblock An $hp$ certified reduced basis method for parametrized parabolic
  partial differential equations.
\newblock {\em Math. Comput. Model. Dyn. Syst.}, 17(4):395--422, 2011.

\bibitem[FK21]{fk21}
Thomas F\"uhrer and Michael Karkulik.
\newblock Space--time least-squares finite elements for parabolic equations.
\newblock {\em Comput. Math. Appl.}, 92:27--36, 2021.

\bibitem[FR17]{fr17}
Stefan Frei and Thomas Richter.
\newblock A second order time-stepping scheme for parabolic interface problems
  with moving interfaces.
\newblock {\em ESAIM Math. Model. Numer. Anal.}, 51(4):1539--1560, 2017.

\bibitem[FS22]{fs22}
Stefan Frei and Maneesh~Kumar Singh.
\newblock {An implicitly extended Crank-Nicolson scheme for the heat equation
  on time-dependent domains}.
\newblock {\em Preprint}, arXiv:2203.06581, 2022.

\bibitem[GHZ12]{ghz12}
Wei Gong, Michael Hinze, and ZJ~Zhou.
\newblock Space-time finite element approximation of parabolic optimal control
  problems.
\newblock {\em J. Numer. Math.}, 20(2):111--146, 2012.

\bibitem[GMU17]{gmu17}
Silke Glas, Antonia Mayerhofer, and Karsten Urban.
\newblock Two ways to treat time in reduced basis methods.
\newblock In {\em Model reduction of parametrized systems}, pages 1--16.
  Springer, 2017.

\bibitem[GS17]{gs17}
Sashikumaar Ganesan and Shweta Srivastava.
\newblock Ale-supg finite element method for convection--diffusion problems in
  time-dependent domains: Conservative form.
\newblock {\em Appl. Math. Comput.}, 303:128--145, 2017.

\bibitem[GS21]{gs21}
Gregor Gantner and Rob Stevenson.
\newblock Further results on a space-time {FOSLS} formulation of parabolic
  {PDEs}.
\newblock {\em ESAIM Math. Model. Numer. Anal.}, 55(1):283--299, 2021.

\bibitem[GS22a]{gs22}
Gregor Gantner and Rob Stevenson.
\newblock {A well-posed First Order System Least Squares formulation of the
  instationary Stokes equations}.
\newblock {\em Preprint}, arXiv:2201.10843, 2022.

\bibitem[GS22b]{gs22+}
Gregor Gantner and Rob Stevenson.
\newblock {Improved rates for a space-time FOSLS of parabolic PDEs}.
\newblock {\em In preparation}, 2022.

\bibitem[Haa17]{haasdonk17}
Bernard Haasdonk.
\newblock {\em Model reduction and approximation: theory and algorithms},
  chapter {Reduced basis methods for parametrized PDEs--a tutorial introduction
  for stationary and instationary problems}.
\newblock Siam Philadelphia, 2017.

\bibitem[Hac92]{hackbusch92}
Wolfgang Hackbusch.
\newblock {\em Elliptic differential equations: theory and numerical
  treatment}, volume~18.
\newblock Springer, Berlin, 1992.

\bibitem[HLZ16]{hlz16}
Peter Hansbo, Mats~G. Larson, and Sara Zahedi.
\newblock A cut finite element method for coupled bulk-surface problems on
  time-dependent domains.
\newblock {\em Comput. Methods Appl. Mech. Engrg.}, 307:96--116, 2016.

\bibitem[HO08]{ho08}
Bernard Haasdonk and Mario Ohlberger.
\newblock Reduced basis method for finite volume approximations of parametrized
  linear evolution equations.
\newblock {\em ESAIM Math. Model. Numer. Anal.}, 42(2):277--302, 2008.

\bibitem[HPUU08]{hpuu08}
Michael Hinze, Ren{\'e} Pinnau, Michael Ulbrich, and Stefan Ulbrich.
\newblock {\em Optimization with PDE constraints}.
\newblock Springer, Berlin, 2008.

\bibitem[Kon70]{kondratiev70}
Vladimir~A. Kondratiev.
\newblock {The smoothness of the solution of the Dirichlet problem for second
  order elliptic equations in a piecewise smooth domain}.
\newblock {\em Differentsial'nye Uravneniya}, 6(10):1831--1843, 1970.

\bibitem[Leh15]{lehrenfeld15}
Christoph Lehrenfeld.
\newblock The nitsche xfem-dg space-time method and its implementation in three
  space dimensions.
\newblock {\em SIAM J. Sci. Comput.}, 37(1):A245--A270, 2015.

\bibitem[Lio68]{lions68}
Jacques-Louis Lions.
\newblock {\em Contr{\^o}le optimal de syst{\`e}mes gouvern{\'e}s par des
  {\'e}quations aux d{\'e}riv{\'e}es partielles}.
\newblock Dunod Gauthier-Villars, Paris, 1968.

\bibitem[LMN16]{lmn16}
Ulrich Langer, Stephen~E. Moore, and Martin Neum{\"u}ller.
\newblock Space-time isogeometric analysis of parabolic evolution problems.
\newblock {\em Comput. Methods Appl. Mech. Engrg.}, 306:342--363, 2016.

\bibitem[LO19]{lo19}
Christoph Lehrenfeld and Maxim Olshanskii.
\newblock {An Eulerian finite element method for PDEs in time-dependent
  domains}.
\newblock {\em ESAIM Math. Model. Numer. Anal.}, 53(2):585--614, 2019.

\bibitem[LS21]{ls21}
Ulrich Langer and Andreas Schafelner.
\newblock Adaptive space-time finite element methods for parabolic optimal
  control problems.
\newblock {\em J. Numer. Math.}, 2021.

\bibitem[LSTY21a]{lsty21b}
Ulrich Langer, Olaf Steinbach, Fredi Tr{\"o}ltzsch, and Huidong Yang.
\newblock Space-time finite element discretization of parabolic optimal control
  problems with energy regularization.
\newblock {\em SIAM J. Numer. Anal.}, 59(2):675--695, 2021.

\bibitem[LSTY21b]{lsty21a}
Ulrich Langer, Olaf Steinbach, Fredi Tr{\"o}ltzsch, and Huidong Yang.
\newblock Unstructured space-time finite element methods for optimal control of
  parabolic equations.
\newblock {\em SIAM J. Sci. Comput.}, 43(2):A744--A771, 2021.

\bibitem[Moo18]{moore18}
Stephen~Edward Moore.
\newblock A stable space--time finite element method for parabolic evolution
  problems.
\newblock {\em Calcolo}, 55(2):1--19, 2018.

\bibitem[MV07]{mv07}
Dominik Meidner and Boris Vexler.
\newblock Adaptive space-time finite element methods for parabolic optimization
  problems.
\newblock {\em SIAM J. Control Optim.}, 46(1):116--142, 2007.

\bibitem[MV08]{mv08}
Dominik Meidner and Boris Vexler.
\newblock {A priori error estimates for space-time finite element
  discretization of parabolic optimal control problems Part I: Problems without
  control constraints}.
\newblock {\em SIAM J. Control Optim.}, 47(3):1150--1177, 2008.

\bibitem[SG20]{sg20}
Shweta Srivastava and Sashikumaar Ganesan.
\newblock {Local projection stabilization with discontinuous Galerkin method in
  time applied to convection dominated problems in time-dependent domains}.
\newblock {\em BIT}, 60(2):481--507, 2020.

\bibitem[SHD01]{shd01}
Josep Sarrate, Antonio Huerta, and Jean Donea.
\newblock {Arbitrary Lagrangian--Eulerian formulation for fluid--rigid body
  interaction}.
\newblock {\em Comput. Methods Appl. Mech. Engrg.}, 190(24-25):3171--3188,
  2001.

\bibitem[SS09]{ss09}
Christoph Schwab and Rob Stevenson.
\newblock A space-time adaptive wavelet method for parabolic evolution
  problems.
\newblock {\em Math. Comp.}, 78:1293--1318, 2009.

\bibitem[Ste15]{steinbach15}
Olaf Steinbach.
\newblock Space-time finite element methods for parabolic problems.
\newblock {\em Comput. Methods Appl. Math.}, 15(4):551--566, 2015.

\bibitem[SW21a]{sw21b}
Rob Stevenson and Jan Westerdiep.
\newblock Minimal residual space-time discretizations of parabolic equations:
  {A}symmetric spatial operators.
\newblock {\em Comput. Math. Appl.}, 101:107--118, 2021.

\bibitem[SW21b]{sw21a}
Rob Stevenson and Jan Westerdiep.
\newblock {Stability of Galerkin discretizations of a mixed space--time
  variational formulation of parabolic evolution equations}.
\newblock {\em IMA J. Numer. Anal.}, 41(1):28--47, 2021.

\bibitem[Tr{\"o}10]{troltzsch10}
Fredi Tr{\"o}ltzsch.
\newblock {\em Optimal control of partial differential equations: theory,
  methods, and applications}.
\newblock American Mathematical Soc., Providence, 2010.

\bibitem[UP14]{up14}
Karsten Urban and Anthony Patera.
\newblock An improved error bound for reduced basis approximation of linear
  parabolic problems.
\newblock {\em Math. Comp.}, 83(288):1599--1615, 2014.

\bibitem[Wlo87]{wloka87}
Joseph Wloka.
\newblock {\em Partial differential equations}.
\newblock Cambridge University, Cambridge, 1987.

\bibitem[Yan14]{yano14}
Masayuki Yano.
\newblock {A space-time Petrov--Galerkin certified reduced basis method:
  Application to the Boussinesq equations}.
\newblock {\em SIAM J. Sci. Comput.}, 36(1):A232--A266, 2014.

\bibitem[YPU14]{ypu14}
Masayuki Yano, Anthony~T. Patera, and Karsten Urban.
\newblock {A space-time $hp$-interpolation-based certified reduced basis method
  for Burgers' equation}.
\newblock {\em Math. Models Methods Appl. Sci.}, 24(09):1903--1935, 2014.

\end{thebibliography}

\end{document}